\documentclass[12pt]{article}
\usepackage{amsmath}
\usepackage{amsthm}
\usepackage{amssymb}
\usepackage{xcolor}
\usepackage{caption}
\usepackage{subcaption}
\usepackage{amsfonts}
\usepackage{bm}
\usepackage{stmaryrd}
\usepackage{graphicx}
\usepackage{mathtools}
\usepackage{rotating}
\usepackage{mathrsfs}
\usepackage[section]{placeins}

\newcommand{\half}{\frac{1}{2}}

\include{styles}

\newtheorem{thm}{Theorem}[section]

\newtheorem{lemma}[thm]{Lemma}

\newtheorem{remark}{Remark}[section]

\newtheorem{theorem}{Theorem}[section]

\usepackage{hyperref}

\title{Superconvergence properties of an upwind-biased discontinuous Galerkin method\thanks{The research is supported by the Air Force Office of Scientific Research (AFOSR), Air Force Material Command, USAF, under grant number FA8655-13-1-3017.}} 

\author{Daniel J. Frean\footnotemark[2]
\and Jennifer K. Ryan\footnotemark[2] \footnotemark[3]\thanks{School of Mathematics, University of East Anglia, Norwich, UK}}

\begin{document}
\maketitle
\renewcommand{\thefootnote}{\fnsymbol{footnote}}

\footnotetext[2]{School of Mathematics, University of East Anglia, Norwich, UK}
\footnotetext[3]{Corresponding author (Jennifer.Ryan@uea.ac.uk)}

\begin{abstract}
In this paper we investigate the superconvergence properties of the discontinuous Galerkin method based on the upwind-biased flux for linear time-dependent hyperbolic equations.  We prove that for even-degree polynomials, the method is locally $\mathcal{O}(h^{k+2})$ superconvergent at roots of a linear combination of the left- and right-Radau polynomials.  This linear combination depends on the value of $\theta$ used in the flux.  For odd-degree polynomials, the scheme is superconvergent provided that a proper global initial interpolation can be defined.  We demonstrate numerically that,
for decreasing $\theta$, 
the discretization errors decrease for even polynomials
and grow for odd polynomials.
We prove that the use of Smoothness-Increasing Accuracy-Conserving (SIAC) filters is still able to draw out the superconvergence information and create a globally smooth and superconvergent solution of $\mathcal{O}(h^{2k+1})$ for linear hyperbolic equations.   Lastly, we briefly consider the spectrum of the upwind-biased DG operator and demonstrate that the price paid for the introduction of the parameter $\theta$ is limited to a contribution to the constant attached to the post-processed error term.
\end{abstract}


%
\section{Introduction}
\label{sec: intro}
%

%
Superconvergence is an increasing area of interest in the study of discontinuous Galerkin (DG) methods {\cite{adjerid,AdjeridBaccouch,Baccouch,ChengShu}. In particular, the superconvergence property can be useful in creating a globally higher-order approximation \cite{CockLuskShuSuli,JiXuRyan,Jietal}.  Recently, it has been observed that this property depends on the flux used to construct the 
DG method, specifically for the Lax-Wendroff DG method \cite{qiuLW}.
\\\\
%
In this paper, we study superconvergence properties for the upwind-biased Runge-Kutta discontinuous Galerkin (RKDG) scheme for linear time-dependent hyperbolic equations and the usefulness of Smoothness-Increasing Accuracy-Conserving (SIAC) filtering for drawing out the superconvergent information. 
%
We prove that, for polynomials of even degree $k$, the method is locally $\mathcal{O}(h^{k+2})$ superconvergent at roots of a linear combination of the left- and right-Radau polynomials.  This linear combination depends on the value of $\theta$ used in the flux.  For odd-degree polynomials, the scheme is superconvergent (of the same order) provided that a proper global initial interpolation can be defined.  We demonstrate numerically that for decreasing $\theta$, the discretization errors decrease for even-degree polynomials and grow for odd-degree polynomials. 
In support of the pointwise observations, we prove that the use of SIAC filters is still able to draw out the superconvergence information and create a globally smooth and superconvergent solution of $\mathcal{O}(h^{2k+1}).$  Lastly, we briefly consider the spectrum of the upwind-biased DG operator and show how the flux parameter $\theta$ manifests in expressions for eigenvalues of the amplification matrix. 
We
demonstrate that the coefficients in the expressions for the physically relevant eigenvalues grow with decreasing $\theta$ when the polynomial degree is odd and vice versa for even degree polynomials.
\\\\
%
The DG method is a class of finite element method which uses piecewise polynomials
as both test functions and to form the approximation space. Development of the theory supporting DG solutions to hyperbolic equations was completed by Cockburn, Shu and others in, for example, \cite{CockburnShuII, CockburnShuLDG, ChengShu, mengshuzhangwu, yangshu2012}
while the lecture notes of Cockburn, Karniadakis and Shu \cite{CockburnLectures} offer a thorough detailing of work from the previous millennium. 
%
DG solutions can be highly oscillatory but this imperfection can often be overcome by a post-processing at the final time. Bramble and Schatz \cite{BrambleSchatz} 
utilised information offered by the negative-order Sobolev norm,
which is related to extracting the ``hidden'' superconvergence from special points,
to
develop a local post-processing technique.
In the context of DG approximations for linear hyperbolic equations, this technique was described by Cockburn, Luskin, Shu and S{\"u}li
\cite{CockLuskShuSuli} and later extended and developed by Ryan and others \cite{SRV,Jietal,JiXuRyan,ji2013negative},
who relabelled it as the SIAC filter,
to treat nonlinear equations, non-periodic boundary conditions and non-uniform meshes. 
\\\\
%
%
Superconvergence has long been an area of interest.  Original speculation regarding the superconvergence of DG approximations at 
Radau points was given by Biswas et al.
\cite{Biswas}.  For the upwind flux, these points are
roots of right-Radau polynomials where the approximation exhibits $\mathcal{O}(h^{2k+1})$ superconvergence at the outflow edge and $\mathcal{O}(h^{k+2})$ superconvergence at
roots in the interior of the element \cite{AdjeridAiffaFlaherty,adjerid,AdjeridBaccouch}.  
The pointwise superconvergence proofs include a wide class of equations (elliptic, parabolic and hyperbolic) \cite{Adjerid2007,Adjerid2009,AdjeridBaccouch,AdjeridKlauser,AderidMassey,Baccouch}.   Other superconvergence results include those of Cheng and Shu \cite{ChengShu} and Yang Yang et al. \cite{yangshu2012,yangyangNegative}. 
These latter results
make use of the negative-order norm and
include a description of superconvergence towards a special projection of the solution, a fruitful area of recent research connected in \cite{Guo} to analysis (\cite{NumRes})
via the Fourier approach.
\\\\
%
The numerical flux function
has an intrinsic effect on the acclaimed superconvergent properties. The traditional 
choice
in DG schemes for conservation laws is the upwind flux while the 
majority of theory has been developed
assuming
that the flux is monotone.
Recently, Meng, Shu and Wu \cite{Meng} introduced in the context of DG methods for linear hyperbolic equations a more general flux function: the upwind-biased flux. This choice avoids the requirement of exact knowledge of the eigenstructure of the Jacobian and may reduce numerical dissipation  (yielding a better approximation in smooth regions) 
but
it is made at the cost of the loss of monotonicity. In \cite{Meng}, $L^2$-stability and optimal 
$\mathcal{O}(h^{k+1})$ 
convergence results
were obtained which are comparable with those for the upwind scheme \cite{Richter1988}. 
These results provide the theoretical foundations for our investigations. 
\\\\
%
We follow the procedure outlined in \cite{Baccouch} to obtain \textit{a posteriori} error estimates, defining a new Radau polynomial parametrised by a measure $\theta$ of the amount of information included 
in the flux
from the upwind end. 
In the spectral analysis, which takes a similar approach to~\cite{Guo} and~\cite{NumRes}, we solve an 
eigenvalue problem to compare results with the upwind case.
These approaches
require a global initial projection, as in \cite{Meng}, which is complicated by the nature of the upwind-biased flux. 
%
Analysis of the SIAC filtered error, which is facilitated by a dual analysis in a similar fashion to \cite{ji2013negative},
is largely uncomplicated by the new flux whose effect 
is limited to a contribution to the error constant.
A properly defined initial interpolation would suggest interesting further work.
\\\\
%
The outline of this paper is as follows: In 
$\S$\ref{sec:preliminaries}, we discuss the preliminaries needed to perform both the pointwise and negative-order norm error analysis as well as review
the upwind-biased 
DG
scheme.  In 
$\S$\ref{sec:Pointwise}, we prove that the upwind-biased DG scheme is indeed $\mathcal{O}(h^{k+2})$ pointwise superconvergent at a linear combination of the right- and left-Radau polynomials that depends on $\theta$.  In 
$\S$\ref{sec:SIACerror}, we extend the negative-order norm error 
analyses
to upwind-biased DG schemes and show that this simply affects the constant in the error.  In 
$\S$\ref{sec:Dispersion}, we briefly discuss dispersion analysis related to this scheme.  We end by supporting this analysis with numerical examples in 
$\S$\ref{sec:Numerical} and conclusions in 
$\S$\ref{sec:conclude}. 
~
\section{Preliminaries}
\label{sec:preliminaries}
We begin with some definitions used in the error estimates for DG solutions and SIAC filtering and 
review the construction of the DG scheme.
%
%
%
%
%
%
%
%
\subsection{Tessellation}
Let $\Omega_h$ be a tessellation of a $d$-dimensional bounded domain $\Omega$ into elements $S$ of regular quadrilateral-type shape. 
Denote by $\partial\Omega_h = \bigcup_{S\in\Omega_h}\partial S$ the union of boundary faces $\partial S$ of the elements $S \in \Omega_h$. A face $e$ internal to the domain has associated with it ``left'' and ``right''
cells
$S_L$ and $S_R$ and exterior-pointing normal vectors $\textbf{n}_L = (n^L_1,\dots,n^L_d)$ and $\textbf{n}_R = (n^R_1,\dots,n^R_d)$ respectively as
in~\cite{JiXuRyan}. Given a function $v$ defined on neighbouring
cells
$S_L$ and $S_R$ which share a face $e$, we refer to its restriction 
to $e$
in $S_L$
by writing $v^L := (v|_{S_L})|_e$ and similarly for $v^R$.

For clarity, we detail the discretization of 
$\Omega = [-1,1]^2$ into $N_x\cdot N_y$ rectangular cells. Elements take the form $S = I_i \times J_j$, where $I_i$ and $J_j$ are intervals given by $$I_i = \left(x_{i-\half},x_{i+\half}\right),
~~1\leq i\leq N_x,
 \quad\textrm{and }\quad J_j = \left(y_{j-\half},y_{j+\half}\right),
 ~~1\leq j\leq N_y,
 $$
where $-1=x_{\half} < x_{\frac{3}{2}} < \dots <x_{N_x+\half}=1$ and $-1=y_{\half} < y_{\frac{3}{2}} < \dots <y_{N_y+\half}=1.$
Denoting by $h_{x,i} = x_{i+\half} - x_{i-\half}$
and by $h_{y,j} = y_{j+\half} - y_{j-\half}$,
we define $$h = \max\left\lbrace\max_{1\leq i\leq N_x}h_{x,i},\max_{1\leq j\leq N_y}h_{y,j}\right\rbrace$$ and require regularity: $h_{x,i},h_{y,j} \geq ch,~c>0.$
%
On an element $S = I_i\times J_j$, evaluation of functions $v$ at cell boundary points of the form $(x_{i+\half},y)$, for example, is written as $v^L
|_{x=x_{i+\half}}
 = v\left(x_{i+\half}^-,y\right).$
%
%
%
%
\subsection{Basis polynomials}
We use as basis functions the orthogonal Legendre polynomials $P_n(\xi)$, which can be defined by the Rodrigues formula
\begin{equation}\label{eqn:Rodrigues}
P_n(\xi) = \frac{1}{2^nn!}\frac{d^n}{d\xi^n}\left((\xi^2-1)^n\right),\quad -1\leq\xi\leq 1,\quad n\geq 0.
\end{equation}
Denote by $\delta_{nm}$ the Kronecker-delta function. Useful properties include
\begin{subequations}
\vspace{-8pt}
\begin{equation}\label{eqn:LegendreOrthog}
\int_{-1}^1P_n(\xi)P_m(\xi)~\textrm{d}\xi = \frac{2}{2n+1}\delta_{nm};
\end{equation}
\vspace{-15pt}
\begin{equation}\label{eqn:LegendreProperties}
P_n(1) = 1;\quad P_n(-1) = (-1)^n; \quad \frac{d}{d\xi}P_n(1) = \half n (n+1);
\end{equation}
\vspace{-15pt}
\begin{equation}\label{eqn:Legendrederiv}
\frac{d}{d\xi}P_{n+1}(\xi) = (2n+1)P_n(\xi) + (2n-3)P_{n-2}(\xi) + (2n-7)P_{n-4}(\xi) + \dots.
\end{equation}
\end{subequations}
Using the Legendre polynomials, we define the right- and left-Radau polynomials as
\begin{equation*}
R^+_{k+1}(\xi) = P_{k+1}(\xi) - P_k(\xi),\quad\quad R^-_{k+1}(\xi) = P_{k+1}(\xi) + P_k(\xi)
\end{equation*}
respectively. 
Note that 
the roots 
$\xi^+_1 < \xi^+_2 < \dots < \xi^+_{k+1} = 1$
of $R^+_{k+1}(\xi)$ and the roots 
$-1 = \xi^-_1 < \xi^-_2 < \dots < \xi^-_{k+1}$
of $R^-_{k+1}(\xi)$ are real, distinct and lie in the interval $[-1,1]$. 
%
%
%
%
\subsection{Function spaces}
Due to the tensor-product nature of the post-processing kernel, we will require the function space $\mathcal{Q}^k(S)$ of tensor-product polynomials of degree at most $k$ in each variable. Thus we 
define
the following finite element spaces:
\begin{subequations}
\begin{eqnarray*}
\qquad 
V^k_h &=& \lbrace\phi \in \mathcal{L}^2(\Omega) :\phi|_S\in\mathcal{Q}^k(S),\forall S\in\Omega_h\rbrace;\label{eqn:Vkh}
\\
\qquad
\Sigma^k_h &=& 
\lbrace\mathbf{\eta} = (\eta_1, \dots,\eta_d)^T\in\left(\mathcal{L}^2(\Omega)\right)^d:
\eta_l\in\mathcal{Q}^k(S), 
l=\leq d;\forall S\in\Omega_h\rbrace,
\end{eqnarray*} 
\end{subequations}
where $\mathcal{L}^2(\Omega)$ is the space of square-integrable functions on $\Omega$. Nevertheless,
it has been observed (\cite{ryanshuatkins}) that the filter also works for the standard polynomial space $\mathscr{P}^k(S)$. Note that
if $\Omega$ is
one-dimensional,
these function spaces $\mathcal{Q}^k(S)$ and $\mathscr{P}^k(S)$ agree.
%
%
%
%
\subsection{Operators on the function spaces}
We list
some
standard notations. 
Denote by $\mathbb{P}_hv$ and 
$\Pi_h \boldsymbol{p}$ the 
$\mathcal{L}^2$-projections of scalar- and vector-valued functions $v$ and $\boldsymbol{p}$.
%
The inner-product over $\Omega$ of two
scalar- or vector-valued
functions is 
given as
\vspace{-8pt}
\begin{equation}
\vspace{-8pt}
(w,v)_{\Omega} = \sum_S\int_Swv~\textrm{d}S;\quad (\boldsymbol{p},\boldsymbol{q})_{\Omega} = \sum_S\int_S\boldsymbol{p}\cdot\boldsymbol{q}~\textrm{d}S.
\end{equation}
%
The $\mathcal{L}^2$-norm on the domain $\Omega$ and on the boundary $\partial\Omega$ is defined as
\vspace{-8pt}
\begin{equation}
\vspace{-8pt}
\lVert\eta\rVert_{\Omega} = \left(\int_{\Omega}\eta^2~\textrm{d}\boldsymbol{x}\right)^{1/2};\quad \lVert\eta\rVert_{\partial\Omega} = \left(\int_{\partial\Omega}\eta^2~\textrm{d}s\right)^{1/2}.
\end{equation}
The $\ell$-norm and semi-norm of the Sobolev space $H^{\ell}(\Omega)$ are defined respectively as
\vspace{-8pt}
\begin{equation}
\vspace{-8pt}
\lVert\eta\rVert_{\ell,\Omega} = \left(\sum_{|\alpha|\leq\ell}\lVert D^{\alpha}\eta\rVert_{\Omega}^2\right)^{1/2};
\quad\quad 
|\eta|_{\ell,\Omega} = \sum_{|\alpha|\leq\ell}\lVert D^{\alpha}\eta\rVert_{\infty,\Omega},\quad \ell > 0,
\end{equation}
where $\alpha$ is a $d$-dimensional multi-index of order $|\alpha|$ and where $D^{\alpha}$ denotes multi-dimensional partial derivatives. The definitions for
these
norms for vector-valued functions are analogous to the scalar case.
In 
$\S$\ref{sec:SIACerror}, we
utilise the negative-order
norm
\vspace{-8pt}
\begin{equation}
\vspace{-8pt}
\lVert\eta\rVert_{-\ell,\Omega} = \sup_{\Phi\in\mathcal{C}^{\infty}_0(\Omega)}\frac{(\eta,\Phi)_{\Omega}}{\lVert\Phi\rVert_{\ell,\Omega}}
\end{equation}
as a means of obtaining $\mathcal{L}^2$-error estimates for the filtered solution. Note that
\vspace{-8pt}
\begin{equation}
\vspace{-8pt}
\lVert\eta\rVert_{-\ell,\Omega} \leq \lVert\eta\rVert_{\Omega},
\qquad \forall \ell \geq 1.
\end{equation} 
The negative-order norm can be used to detect oscillations of a function~(\cite{CockLuskShuSuli}) and is connected to the SIAC filter which smooths oscillations in the error.
Finally, the difference quotients $\partial_{h,j}v$ are given by
\vspace{-8pt}
\begin{equation}
\vspace{-8pt}
\partial_{h,j}v(\boldsymbol{x}) = \frac{1}{h}\left[v(\boldsymbol{x}+\half h\textbf{e}_j)~-~v(\boldsymbol{x}-\half h\textbf{e}_j)\right],
\end{equation}
where $\textbf{e}_j$ is the $j^{\textrm{th}}$ component unit normal vector. For any multi-index $\alpha = (\alpha_1,\dots,\alpha_d)$, we define the $\alpha^{\textrm{th}}$-order difference quotient by
\vspace{-8pt}
\begin{equation}
\vspace{-8pt}
\partial_{h,j}^{\alpha}v(\boldsymbol{x}) = \left(\partial_{h,1}^{\alpha_1}\cdot\cdot\cdot\partial_{h,d}^{\alpha_d}\right)v(\boldsymbol{x}).
\end{equation}
\subsection{The convolution kernel}
We detail the component parts of the
SIAC
filter as defined in
\cite{Jietal}.
A B-spline $\psi^{(\ell)}$ of order $\ell$ is defined recursively by
\vspace{-8pt}
\begin{equation*}
\vspace{-8pt}
\psi^{(\ell)} = \chi_{[-\half,\half]};\quad \psi^{(\ell)} = \psi^{(\ell-1)}\star\chi_{[-\half,\half]},~~\ell\geq 2,
\end{equation*} 
where $\chi_{[-\half,\half]}$ is the characteristic function on the interval $[-\half,\half]$ and where the operator $\star$ denotes convolution:
\vspace{-8pt}
\begin{equation*}\vspace{-8pt}
f(x) \star g(x) = \int_{\mathbb{R}}f(x-y)g(y)~\textrm{d}y.
\end{equation*}
For a multi-index $\alpha$
and
given a point $\boldsymbol{x} = (x_1,\dots,x_d) \in \mathbb{R}^d$, we define 
\vspace{-8pt}\begin{equation*}\vspace{-8pt}
\psi^{(\alpha)}(\boldsymbol{x}) = \psi^{(\alpha_1)}(x_1)\cdot\cdot\cdot\psi^{(\alpha_d)}(x_d);
\qquad
\psi^{(\ell)}(\boldsymbol{x}) = \psi^{(\ell)}(x_1)\cdot\cdot\cdot\psi^{(\ell)}(x_d).
\end{equation*}
In this way, we construct a convolution kernel
\vspace{-5pt}
\begin{equation}\vspace{-8pt}
\mathcal{K}_h^{(r+1,\ell)}(\boldsymbol{x})  
= \sum_{\gamma\in\mathbb{Z}^d}\boldsymbol{c}_{\gamma}^{r+1,\ell}\psi^{(\ell)}(\boldsymbol{x}-\gamma)
\end{equation} 
which comprises a linear combination of $r+1$ B-splines $\psi^{(\ell)} \in \mathcal{C}^{\ell-2}$ of order $\ell$ such that $\mathcal{K}_h^{(r+1,\ell)}$ has compact support and reproduces (by convolution) polynomials of degree strictly less than $r$. Typically, $r = 2k$ and $\ell = k+1$, where $k$ is the degree of the polynomial basis.
The coefficients $\boldsymbol{c}_{\gamma}$ are tensor products of the coefficients $c_{\gamma}$ found by requiring the reproduction of polynomials property $\mathcal{K}_h^{(r+1,\ell)}\star x^p = x^p,~p<r,$ in the one-dimensional case. It is important to note that derivatives of a convolution with this kernel may be written in terms of difference quotients:
\begin{equation}
D^{\alpha}\left(\psi_h^{(\beta)}\star v\right) = \psi_h^{(\beta-\alpha)}\star\partial_h^{\alpha}v,\quad \beta_i \geq \alpha_i,
\end{equation}
where $\psi_h^{(\beta)}(x) = \psi_h^{(\beta/h)}/h^d$. Further properties of the kernel may be found in~\cite{JiXuRyan}. 
\\
By convolving the approximation with the kernel, we obtain
the SIAC filtered solution \vspace{-10pt}
\begin{equation}\vspace{-5pt}
u_h^{\star}(\bar{x}, t) \coloneqq \mathcal{K}^{(r+1,\ell)}_h(\bar{x})\star u_h(\bar{x},t),
\end{equation}
which
displays increased accuracy and
reduced
oscillations in the error.
The results in this paper treat only the symmetric kernel where the nodes $\gamma$ are uniformly spaced. Extension to the one-sided filter given in~\cite{Jietal} and \cite{SRV} is a straight-forward task.  
~\newline

%
\subsection{
DG
discretization of the 
linear hyperbolic conservation law
}
%
Let $\boldsymbol{x} = (x_1,\dots,x_d)\in\Omega \subset \mathbb{R}^d$ and
consider the linear hyperbolic system 
\vspace{-8pt}
\begin{eqnarray}\label{eqn:linearhyperbolicsystem}\vspace{-8pt}
u_t + \sum_{i=1}^df_i(u)_{x_{i}} &=& 0,\quad\quad\quad (\boldsymbol{x},t)\in\Omega\times (0,T],
\end{eqnarray}
for the conserved quantity $u(\boldsymbol{x},t)$
with
a linear flux function $f_i(u) = a_iu
$ with real constant coefficients $a_{i} \geq 0$. We adopt throughout this paper the assumptions of a smooth initial condition and of periodicity in the boundary conditions: $$u(\boldsymbol{x},0) = u_0(\boldsymbol{x})\in\mathcal{C}^{\infty}\left(\Omega\right);\quad\quad u(\boldsymbol{x},0) = u(\boldsymbol{x},T).$$
For much of the error analyses, 
one requires only
$u_0(\boldsymbol{x})\in\mathcal{H}^{k+1}\left(\Omega\right)$ but for the proof of Theorem~\ref{Thm:pointwise}, 
the stronger condition is required
for
a Maclaurin series. 
%
%
%
%

Given a tessellation $\Omega_h$,
the method is facilitated by multiplying equation~(\ref{eqn:linearhyperbolicsystem}) by a test function $v$ and integrating by parts over an arbitrary element $S\in\Omega_h$ to obtain
\begin{align}\label{exactDG method}\vspace{-15pt}
\int_Su_tv~\textrm{d}S - \sum_{i=1}^d\int_Sf_i(u)v_{x_{i}}~\textrm{d}S &+ \sum_{i=1}^d\int_{\partial S} f_i(u)v~\textrm{d}s  ~=~ 0.
\vspace{-8pt}\end{align}
We 
adopt 
the piecewise-polynomial approximation space 
$V_h^k$ defined in equation~(\ref{eqn:Vkh}).
Note that functions $v \in V_h^k$ are allowed to be discontinuous across element boundaries. This is the distinguishing feature of DG schemes amongst finite element methods.
By replacing in equation~(\ref{exactDG method}) the solution $u(\boldsymbol{x},t)$ by a numerical approximation $u_h(\boldsymbol{x},t)$ such that $u_h(\cdot,t)\in V_h^k$, we obtain the discontinuous Galerkin method: find, for any $v \in V_h^k$ and for all elements $S$, the unique function $u_h(\cdot,t) \in V^k_h$ which satisfies \vspace{-10pt}
\begin{align}\label{DG method}\vspace{-10pt}
\int_S(u_h)_tv~\textrm{d}S - \sum_{i=1}^d\int_Sf_i(u_h)v_{x_{i}}~\textrm{d}S &+ \sum_{i=1}^d\int_{\partial S} \hat{f}_i(u_h)n_iv~\textrm{d}s  ~=~ 0,
\end{align}
where $\hat{f}$ is a single-valued numerical flux function used to enforce weak continuity at the cell interfaces and where $\textbf{n} = (n_1,\dots,n_d)$ is the unit normal vector to the domain of integration.
The initial condition $u_h(\boldsymbol{x},0) \in V^k_h$ is usually taken to be the $\mathcal{L}^2$-projection $\mathbb{P}_hu_0$, although the analysis in $\S$\ref{sec:Pointwise} favours a function which interpolates $u(\boldsymbol{x},0)$ at the superconvergent points.
Summing equation~(\ref{DG method}) over the elements $S$, we get a compact expression for the global scheme:
\begin{equation*}
\left((u_h)_t, v\right)_{\Omega_h} + B(u_h; v) = 0,
\end{equation*}
where we define for future use
\begin{equation}
 B(u_h; v) \coloneqq -\sum_{i=1}^d\left(f_i(u_h),v_{x_{i}}\right)_{\Omega_h} + \sum_{i=1}^d\left(\hat{f}_i(u_h)n_i,v\right)_{\partial\Omega_h}.
\end{equation}
%
%
%
%
In order to ensure stability of the scheme~(\ref{DG method}), it remains to define the numerical flux functions $\hat{f}$ featured in the cell boundary terms. In general, $\hat{f}\left(u_h^L,u_h^R\right)$ depends on values of the numerical solution from both sides of the cell interface. Traditionally (\cite{CockburnShuII}), this function is chosen to be a so-called monotone flux,
which satisfies Lipschitz continuity, consistency ($\hat{f}(u,u) = f(u)$) and monotonicity ($\hat{f}(\uparrow,\downarrow)$). 
For our test equation~(\ref{eqn:linearhyperbolicsystem}), where the linear flux
determines a single wind direction, the usual choice in the literature is to satisfy the upwinding condition. In this paper, where $\hat{f}(u_h) = a\widehat{u}_h$, we choose instead the upwind-biased flux 
\begin{equation}\label{eqn:upwindbiased flux}
\widehat{u}_h = \theta u_h^L + (1-\theta)u_h^R,\quad\quad \theta = (\theta_1,\dots,\theta_d),\quad\half<\theta_i\leq 1,
\end{equation}
defined here for periodic boundary conditions, which was recently described in the context of DG methods by Meng et al. in~\cite{Meng}. 
More information is taken from the left than from the right of cell boundaries and, when $\theta_i = 1$ for each $i$, the upwind-biased flux reduces to the purely upwind flux $u^L_h$. We do not allow $\theta_i = \half$, which gives a central flux, since then the scheme becomes unstable. 
%
For clarity, we particularise the evaluation of this flux at cell boundary points in two dimensions:
\begin{eqnarray*}
\widehat{u}_h|_{x=x_{i+\half}}~ &=&~ \theta_1 u_h\left(x_{i+\half}^-,y\right) ~+~ (1-\theta_1)u_h\left(x_{i+\half}^+,y\right)\quad\textrm{at } (x_{i+\half},y),
\\
\widehat{u}_h|_{y=y_{j+\half}}~ &=&~ \theta_2 u_h\left(x,y_{j+\half}^-\right) ~+~ (1-\theta_2)u_h\left(x,y_{j+\half}^+\right)\quad\textrm{at } (x,y_{j+\half}^+).
\end{eqnarray*}
Choosing, over the upwinding principle, the upwind-biased flux offers several rewards~(\cite{Meng}): a possibly reduced numerical viscosity and easier construction, for example. However, 
for values $\theta < 1$,
we sacrifice the established property of monotonicity.
In this paper, we consider in terms of superconvergence the severity of this loss.
%
%
%
%
%
\section{Superconvergent pointwise error estimate}
\label{sec:Pointwise}%

In this section, we demonstrate that when the flux in the DG scheme is chosen to be the upwind-biased flux, the leading order term in the error is proportional to a sum, dependent upon $\theta$,
of left- and right-Radau polynomials. The main result, Theorem~\ref{Thm:pointwise}, is an extension of the observation, for example of Adjerid, Baccouch and others (\cite{adjerid,AdjeridBaccouch}), that the superconvergent points for the purely upwind DG scheme are generated by roots of right-Radau polynomials. To this end, we define a ``special'' Radau polynomial 
\begin{equation}\label{eqn:specialRadau}
R^{\star}_{k+1}(\xi) \coloneqq \theta R^+_{k+1}(\xi) + (-1)^k(1-\theta)R^-_{k+1}(\xi),
\qquad\xi\in[-1,1],\quad\half<\theta\leq1.
\end{equation}
We will show that roots of $R_{k+1}^{\star}(\xi)$, which change with the value of $\theta$, generate superconvergent points on the order of $h^{k+2}$ for the upwind-biased scheme. 
%
%
%
Interestingly, it turns out that for odd polynomial degree $k$, one of these ``superconvergent points'' lies outside the element $[-1,1]$ when $\theta < 1$. 
In
the following Lemma, 
we describe 
the roots of $R^{\star}_{k+1}(\xi)$. For this argument, 
we consider the polynomials $P_n(\xi)$ arising from the Rodrigues formula~(\ref{eqn:Rodrigues}) 
and then extend their domain of definition to $[-M,M]$ for some fixed, sufficiently large $M>0$.
Of course, any root that we find to be outside $[-1,1]$ will not directly manifest as a superconvergent point of the DG solution. 

\begin{lemma}
\label{lmm:oddvseven}
Let $k \in \mathbb{N}$ and consider 
\begin{equation*}
R^{\star}_{k+1}(\xi) = \theta R^+_{k+1}(\xi) + (-1)^k(1-\theta)R^-_{k+1}(\xi), \quad\quad\xi\in\mathbb{R},\qquad
\half<\theta\leq1.
\end{equation*} 
When $k$ is even, all $k+1$ roots
of $R^{\star}_{k+1}(\xi)$
lie in the interval $[-1,1]$. When $k$ is odd, exactly one root
is greater than $1$ while all other roots lie in the interval $[-1,1]$. 
\end{lemma}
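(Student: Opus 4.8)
The plan is to convert the statement into a root-counting problem for a single rational function and then to pin down the location of the roots by evaluating that function at $\xi=\pm1$. First I would expand $R^{\star}_{k+1}$ using $R^{\pm}_{k+1}=P_{k+1}\pm P_k$ and separate the two parities. Writing $c:=2\theta-1\in(0,1]$, a short computation gives
\[
R^{\star}_{k+1}(\xi)=P_{k+1}(\xi)-c\,P_k(\xi)\quad(k\text{ even}),\qquad
R^{\star}_{k+1}(\xi)=c\,P_{k+1}(\xi)-P_k(\xi)\quad(k\text{ odd}).
\]
Because each $P_n$ has positive leading coefficient and $c>0$, in both cases $R^{\star}_{k+1}$ has degree exactly $k+1$ and hence at most $k+1$ real roots. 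Away from the zeros of $P_k$, the equation $R^{\star}_{k+1}(\xi)=0$ is equivalent to $g(\xi)=L$, where $g(\xi):=P_{k+1}(\xi)/P_k(\xi)$ and $L=c$ for even $k$ (so $L\in(0,1]$) while $L=1/c$ for odd $k$ (so $L\in[1,\infty)$).

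The key step, and the one I expect to be the main obstacle, is to establish that $g$ is strictly increasing on each interval between consecutive poles. For this I would invoke the confluent Christoffel--Darboux identity
\[
P'_{k+1}(\xi)P_k(\xi)-P'_k(\xi)P_{k+1}(\xi)=\frac{1}{k+1}\sum_{n=0}^{k}(2n+1)P_n(\xi)^2>0,
\]
valid for all real $\xi$. The numerator of $g'$ is exactly this Wronskian, so $g'>0$ wherever $g$ is defined. As a by-product, positivity of the Wronskian shows $P_{k+1}$ and $P_k$ have no common zero, so no root of $R^{\star}_{k+1}$ is lost when passing to $g$ (at a zero $s$ of $P_k$ one has $R^{\star}_{k+1}(s)\propto P_{k+1}(s)\neq0$).

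With monotonicity in hand the counting is routine. The $k$ simple zeros $s_1<\dots<s_k$ of $P_k$, all in $(-1,1)$, are the poles of $g$ and partition $\mathbb{R}$ into $k+1$ intervals; since the leading coefficients are positive, $g$ runs from $-\infty$ to $+\infty$ across each interval (approaching $-\infty$ from the right of each pole and $+\infty$ from the left, and behaving like a positive multiple of $\xi$ at $\pm\infty$). Hence $g=L$ has exactly one solution per interval, accounting for all $k+1$ roots of $R^{\star}_{k+1}$. The $k-1$ bounded intervals $(s_m,s_{m+1})$ lie inside $(-1,1)$, so those roots are automatically interior.

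It remains to locate the two outer roots, which I would do using $P_n(1)=1$ and $P_n(-1)=(-1)^n$ from~\eqref{eqn:LegendreProperties}, giving $g(-1)=(-1)^{k+1}/(-1)^k=-1$ and $g(1)=1$. On $(-\infty,s_1)$ we have $L>-1=g(-1)$ in both parities, so the corresponding root exceeds $-1$ and lies in $(-1,s_1)\subset(-1,1)$. On $(s_k,\infty)$ I compare $L$ with $g(1)=1$: for even $k$, $L=c\le1$, so the root is $\le1$ and remains in $[-1,1]$ (equal to $1$ only when $\theta=1$); for odd $k$, $L=1/c\ge1$, so the root is $\ge1$, and it is strictly greater than $1$ precisely when $\theta<1$. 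Collecting these facts shows that for even $k$ all $k+1$ roots lie in $[-1,1]$, whereas for odd $k$ exactly one root (the rightmost) leaves the interval through $\xi=1$ as $\theta$ drops below $1$ while the remaining $k$ roots stay in $[-1,1]$.
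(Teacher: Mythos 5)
Your proof is correct, and it takes a genuinely different route from the paper's. The paper starts from the same parity split, $R^{\star}_{k+1}=P_{k+1}-(2\theta-1)P_k$ for even $k$ and $(2\theta-1)P_{k+1}-P_k$ for odd $k$, but then works with $R^{\star}_{k+1}$ directly: it evaluates the sign at $\xi=\pm1$ and shows, using the derivative expansion~(\ref{eqn:Legendrederiv}) together with positivity and growth of Legendre polynomials for $|\xi|>1$, that $\frac{d}{d\xi}R^{\star}_{k+1}$ has a fixed sign outside $[-1,1]$; the Intermediate Value Theorem then produces the single exterior root in the odd case. You instead pass to the rational function $g=P_{k+1}/P_k$ and derive global monotonicity between the poles from the confluent Christoffel--Darboux identity
\begin{equation*}
P'_{k+1}(\xi)P_k(\xi)-P'_k(\xi)P_{k+1}(\xi)=\frac{1}{k+1}\sum_{n=0}^{k}(2n+1)P_n(\xi)^2>0 .
\end{equation*}
This buys strictly more than the paper's argument: the paper only shows that no \emph{real} root lies outside $[-1,1]$ (even $k$) or that exactly one real root exceeds $1$ (odd $k$), tacitly assuming that all $k+1$ roots are real --- a fact that Lemma~\ref{lmm:interpolating special} actually needs, since the Lagrange interpolant there requires $k+1$ distinct real nodes in $[-1,1]$. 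Your bijection-per-interval count delivers realness, simplicity, and interlacing with the zeros of $P_k$ in one stroke, and it also resolves the $\theta=1$ edge case cleanly (the rightmost root sits exactly at $\xi=1$, exceeding $1$ precisely when $\theta<1$, whereas the paper's claim of a root strictly greater than $1$ degenerates at $\theta=1$). What the paper's route buys in exchange is self-containedness: it uses only the Legendre properties~(\ref{eqn:LegendreProperties})--(\ref{eqn:Legendrederiv}) already listed in $\S$\ref{sec:preliminaries}, whereas you must import the Christoffel--Darboux identity (standard, but not stated in the paper).
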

\begin{proof}
We split the proof into two cases,
writing the special Radau polynomial as 
\begin{eqnarray}
R^{\star}_{k+1} = \begin{cases}
P_{k+1} - (2\theta-1)P_k,\quad &\textrm{ when } k \textrm{ is even},
\\
(2\theta-1)P_{k+1} - P_k,\quad &\textrm{ when } k \textrm{ is odd}.
\end{cases}
\end{eqnarray}
\\
Suppose that $k$ is even and let  $\half < \theta \leq 1$. It is clear that there is no root of $R^{\star}_{k+1}(\xi)$ greater than $1$ since $R^{\star}_{k+1}(1) = 2(1-\theta) > 0$ and, by 
equation~(\ref{eqn:Legendrederiv}), we have, for $\xi > 1$,
\begin{eqnarray*}
\frac{d}{d\xi}R^{\star}_{k+1}(\xi) &=& \left[(2k+1)P_k - (2\theta-1)(2k-1)P_{k-1}\right]
\\
&&+
\left[(2k-3)P_{k-2} - (2\theta-1)(2k-5)P_{k-3}\right]
+\dots
+P_0
.
\end{eqnarray*}
Similarly, there is no root $\xi < -1$ since $R^{\star}_{k+1}(-1) = -2\theta < 0$ and $\frac{d}{d\xi}R^{\star}_{k+1}(\xi) > 0$.
\\
Suppose instead that the polynomial degree $k$ is odd. To see that $R^{\star}_{k+1}(\xi)$ has a root $\xi = c > 1,$ note that while $R^{\star}_{k+1}(1) = -2(1-\theta) \leq 0$,
we have 
$\lim_{\xi\rightarrow\infty}R^{\star}_{k+1}(\xi) = +\infty.$
Appealing to the Intermediate Value Theorem yields 
the required root.
Furthermore, there is only one such root since, for all $\xi>1$, 
property~(\ref{eqn:Legendrederiv}) 
gives
\begin{eqnarray*}
\frac{d}{d\xi}R^{\star}_{k+1}(\xi) 
&\geq& (2\theta-1)[(2k+1)(P_k - P_{k-1}) + (2k-3)(P_{k-2}-P_{k-3}) 
\\
&&
+ \dots + 3(P_1-P_0)]
>0.
\end{eqnarray*}
A similar argument shows that there are no roots $\xi < -1$: while $R^{\star}_{k+1}(-1) = 2\theta > 0$, we have that $\frac{d}{d\xi}R^{\star}_{k+1}(\xi) < 0$ for all $\xi < -1$. 
\qquad\end{proof}
\begin{remark}\normalfont
Recall that when $\theta = 1$, one of the superconvergent points is the downwind end $\xi^{\star}_{k+1} = \xi^+_{k+1} = 1$. When $k$ is even, 
the roots of $R^{\star}_{k+1}(\xi)$
shift to the left with decreasing values of $\theta
$. On the other hand, when $k$ is odd, the points shift to the right and $\xi^{\star}_{k+1} > 1$. For example, when $k = 1$, the roots of $R^{\star}_{k+1}(\xi)$ are given by
$$\xi^{\star}_{1,2} = \frac{1\mp\sqrt{1-3\theta+3\theta^2}}{3(2\theta-1)}.$$
\begin{center}
\begin{table} \footnotesize\caption{Approximations to roots $\xi^{\star}_j,~j=1,\dots,k+1,$ of $R^{\star}_{k+1}(\xi) \textrm{ when } \theta=1,
\frac{3}{4}
$.}
\label{tab:roots of Rstar}
 \begin{tabular}{|c|| l l l l l| l l l l l||} 
 \hline
$k$ & \multicolumn{5}{c |}{$\xi^{\star}_j = \xi^{+}_j$ when $\theta = 1$} & \multicolumn{5}{c ||}{$\xi^{\star}_j$ when $\theta = 
\frac{3}{4}$} \\ [0.5ex] 
 \hline\hline
$1$ & 
$-\frac{1}{3}$&$1$&&& & $-0.21$&$1.54$&&& \\ 
 \hline
$2$ & $-0.68$&$0.28$&$1$&& & $-0.72$&$0.16$&$0.86$&& \\
 \hline
$3$ & $-0.82$&$-0.18$&$0.57$&$1$& & $-0.80$&$-0.11$&$0.69$&$1.36$& \\
 \hline
$4$ & $-0.88$&$-0.44$&$0.16$&$0.72$&$1$ & $-0.89$&$-0.48$&$0.09$&$0.62$&$0.93$ \\ [1ex] 
 \hline
\end{tabular}
\end{table}
\end{center}
Shown in Table~\ref{tab:roots of Rstar} are approximate values of the roots of $R^{\star}_{k+1}(\xi)$ for two values of $\theta$. 
Observe that
for $k = 1$ and $k = 3$, one of the roots is indeed greater than $1$.
\end{remark}\noindent

Following the lines of~\cite{AdjeridBaccouch}, 
we interpolate the initial condition at roots of $R^{\star}_{k+1}(\xi)$, where $k$ is even. Lemma~\ref{lmm:oddvseven} dictates that we cannot obtain in the same way a $k^{\textrm{th}}$ degree polynomial interpolating $u$ at all $k+1$ roots of $R^{\star}_{k+1}(\xi)$ when $k$ is odd.
Instead, one can define a global projection similar to~\cite{Meng} but we leave this as further work.
%
%
\begin{lemma}\label{lmm:interpolating special}
Let $k > 0$ be an even integer and suppose that $u \in \mathcal{C}^{k+1}\left([0,h]\right)$. 
Let $\xi^{\star}_j\in[-1,1],~ j=1,\dots,k+1$, be the roots of 
$
R_{k+1}^{\star}(\xi)$ 
as defined by~(\ref{eqn:specialRadau}).
Consider the $k^{\textrm{th}}$ degree Lagrange polynomial
$$\pi^{\star}u(x) = \sum_{n=1}^{k+1}L_n(x),\quad\quad
L_n(x) = u(x_n^{\star})\prod_{\substack{j=1\\ j\neq n}}^{k+1}\frac{x-x^{\star}_{j}}{x^{\star}_n-x^{\star}_{j}},
\quad\quad
 x\in[0,h]
 ,
 $$
interpolating $u$ at the (distinct) roots 
$x_{j}^{\star} = \frac{h}{2}(\xi_j^{\star} + 1)$ of the shifted special Radau polynomial $R^{\star}_{k+1}(x)$ on $[0,h]$.
Then the interpolation error satisfies
\begin{align}
u(x(\xi)) - \pi^{\star}u(x(\xi)) =  h^{k+1}c_{k+1}R^{\star}_{k+1}(\xi) + \sum_{\ell=k+2}^{\infty}Q_{\ell}(\xi)h^{\ell},
\end{align}
where $Q_{\ell}(\xi)$ is a polynomial of degree at most $\ell$.
\end{lemma}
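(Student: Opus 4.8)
The plan is to reduce everything to interpolation on the reference interval $[-1,1]$ and then exploit the linearity of the interpolation operator together with its exact reproduction of polynomials of degree at most $k$. First I would pass to the scaled variable $\xi$ via $x(\xi) = \frac{h}{2}(\xi+1)$, so that the nodes $x_j^{\star}$ correspond to the reference nodes $\xi_j^{\star}$ and $\pi^{\star}u(x(\xi))$ is precisely the degree-$k$ polynomial in $\xi$ interpolating $\hat{u}(\xi) := u\!\left(\frac{h}{2}(\xi+1)\right)$ at the $\xi_j^{\star}$. The crucial structural observation is that the nodal (Newton) polynomial factors through the special Radau polynomial: since $k$ is even, Lemma~\ref{lmm:oddvseven} guarantees that $R^{\star}_{k+1}$ has exactly $k+1$ real, distinct roots $\xi_j^{\star}$, and $R^{\star}_{k+1}$ has degree exactly $k+1$ with leading coefficient equal to that of $P_{k+1}$, so that
\[
\prod_{j=1}^{k+1}(\xi - \xi_j^{\star}) = \beta\, R^{\star}_{k+1}(\xi),
\]
where $\beta$ is the reciprocal of the leading coefficient of $P_{k+1}$, a fixed nonzero constant independent of $\theta$.

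Next I would expand $\hat{u}$ about $\xi = -1$ (equivalently, expand $u$ in a Maclaurin series about $x=0$), writing $\hat{u}(\xi) = \sum_{m\geq 0}\frac{u^{(m)}(0)}{m!}\left(\frac{h}{2}\right)^m(\xi+1)^m$. Because the interpolation operator $\hat{\pi}$ is linear and reproduces every polynomial of degree $\leq k$ exactly, all terms with $m \leq k$ cancel and
\[
\hat{u}(\xi) - \hat{\pi}\hat{u}(\xi) = \sum_{m=k+1}^{\infty}\frac{u^{(m)}(0)}{m!}\left(\frac{h}{2}\right)^m\Big[(\xi+1)^m - \hat{\pi}\big((\xi+1)^m\big)\Big].
\]
For each $m\geq k+1$ the bracketed interpolation error is a polynomial of degree $m$ vanishing at all $k+1$ nodes $\xi_j^{\star}$, hence divisible by $\prod_j(\xi-\xi_j^{\star}) = \beta R^{\star}_{k+1}(\xi)$; thus it equals $R^{\star}_{k+1}(\xi)\,p_{m-k-1}(\xi)$ with $p_{m-k-1}$ a polynomial of degree $m-k-1$. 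For $m=k+1$ the quotient is a constant, so the $h^{k+1}$ contribution is exactly $h^{k+1}c_{k+1}R^{\star}_{k+1}(\xi)$ with $c_{k+1}$ a fixed multiple of $u^{(k+1)}(0)$. Collecting the contribution of each fixed power $h^{\ell}$ for $\ell\geq k+2$ then gives $Q_{\ell}(\xi) = (\mathrm{const})\,u^{(\ell)}(0)\,R^{\star}_{k+1}(\xi)\,p_{\ell-k-1}(\xi)$, a polynomial of degree $(k+1)+(\ell-k-1)=\ell$, which is the claimed bound.

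The main point requiring care is the passage from a formal to a genuine expansion. The stated hypothesis $u\in\mathcal{C}^{k+1}([0,h])$ only justifies a finite Taylor expansion with remainder, which yields the leading term $h^{k+1}c_{k+1}R^{\star}_{k+1}(\xi)$ plus a controlled remainder, whereas the displayed infinite tail $\sum_{\ell\geq k+2}Q_{\ell}(\xi)h^{\ell}$ genuinely relies on the global $\mathcal{C}^{\infty}$ (analytic Maclaurin) regularity assumed throughout this section for the main theorem. I would therefore invoke that stronger smoothness to legitimize the term-by-term manipulation of the series, or equivalently truncate at any desired order $\ell$ and bound the tail by $\mathcal{O}(h^{\ell+1})$ using the Lagrange remainder and the change of variables. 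Everything else is bookkeeping: verifying the degree count for $Q_{\ell}$ and identifying the constant $c_{k+1}$.
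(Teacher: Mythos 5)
Your proof is correct, but it follows a genuinely different route from the paper's. The paper starts from the classical Lagrange remainder formula: for each $x$ there is an intermediate point $s(x)\in(0,h)$ with $u(x)-\pi^{\star}u(x)=\frac{1}{(k+1)!}u^{(k+1)}(s(x))\prod_{j}(x-x_j^{\star})$; after scaling to $[-1,1]$ this produces the factor $h^{k+1}\prod_j(\xi-\xi_j^{\star})$ up front, and the full series is then obtained by Maclaurin-expanding $u^{(k+1)}(s(x(\xi)))$ in powers of $h$, so every term automatically inherits the factor $\prod_j(\xi-\xi_j^{\star})\propto R^{\star}_{k+1}(\xi)$. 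You instead expand $u$ itself about $x=0$ and use linearity of the interpolation operator together with exact reproduction of polynomials of degree $\le k$, so the error becomes a sum over $m\ge k+1$ of monomial interpolation errors, each vanishing at the $k+1$ nodes and hence divisible by the monic nodal polynomial $\beta R^{\star}_{k+1}(\xi)$; your identification of $\beta$ as the reciprocal of the leading coefficient of $P_{k+1}$, independent of $\theta$, is right precisely because $k$ is even, so $R^{\star}_{k+1}=P_{k+1}-(2\theta-1)P_k$. What your route buys: the degree count and the $R^{\star}_{k+1}$-factorization of each $Q_{\ell}$ are manifest, whereas the paper must differentiate $u^{(k+1)}(s(x(\xi)))$ with respect to $h$ at $h=0$ and assert that the resulting coefficients are polynomials of degree $m$ in $\xi$ --- a delicate point, since $s(x(\xi))$ is only defined pointwise and is not obviously smooth in $(\xi,h)$. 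What the paper's route buys: the leading constant $c_{k+1}$ drops out immediately from the standard remainder formula. Finally, you explicitly flag the mismatch between the stated hypothesis $u\in\mathcal{C}^{k+1}([0,h])$ and the infinite expansion, and resolve it by invoking the $\mathcal{C}^{\infty}$ regularity assumed in this section (or by truncating with a Lagrange remainder); the paper's own proof silently requires the same strengthening, since its Maclaurin expansion of $u^{(k+1)}$ also needs more than $\mathcal{C}^{k+1}$ smoothness.
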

%
%
%
{\em Proof}.
The standard Lagrangian interpolation theory yields, for $x \in [0,h]$, an $s = s(x) \in (0,h)$ such that 
\begin{equation*}
e(x) = u(x) - \pi^{\star}u(x)
 = 
 \frac{1}{(k+1)!}u^{(k+1)}(s(x))\prod^{k+1}_{j=1}\left(x - x^{\star}_{j}\right).
\end{equation*}
By the linear mapping $x = \frac{h}{2}(1 + \xi)$, we obtain
\begin{equation}\label{eqn:eofxofxi}
e\left(x(\xi)\right) =  \frac{h^{k+1}u^{(k+1)}\left(s(x(\xi))\right)}{2^{k+1}(k+1)!}\prod^{k+1}_{j=1}\left(\xi - \xi^{\star}_{j}\right).
\end{equation}
The Maclaurin series of $u^{(k+1)}\left(s(x(\xi))\right)$ with respect to $h$ gives the leading order term in the error so that equation~(\ref{eqn:eofxofxi}) becomes
\begin{equation}
e\left(x(\xi)\right) =  h^{k+1}\frac{u^{(k+1)}(0)}{2^{k+1}(k+1)!}\prod^{k+1}_{j=1}\left(\xi - \xi^{\star}_{j}\right) +  \sum_{m=1}^{\infty} Q_m(\xi)h^{m+k+1},
\end{equation}
where $Q_{m}(\xi)$ comprises the product of $R^{\star}_{k+1}(\xi)$ and a polynomial of degree $m$ in $\xi$:
$$
Q_{m}(\xi) = \frac{\frac{d^{m}}{dh^{m}}\frac{\partial^{k+1} u\left(s(x(\xi))\right)}{\partial x^{k+1}}|_{h=0}}{2^{k+1}(k+1)! m!}\prod^{k+1}_{j=1}\left(\xi - \xi^{\star}_{j}\right).
\qquad\qedhere
$$
\\
The interpolatory polynomials described in Lemma~\ref{lmm:interpolating special}  
are used as initial conditions in the proof of Theorem~\ref{Thm:pointwise}.
However, 
numerical results in $\S$\ref{sec:Numerical} confirm that, in general, there are only $k$ superconvergent points in each element when $k$ is odd. Thus we proceed 
to
the main result in this section assuming an even polynomial degree. 
%
%
%
\begin{theorem}\label{Thm:pointwise}
Let $k > 0$ be an even integer. Consider the approximate solution $u_h$ to
equation~(\ref{eqn:linearhyperbolicsystem}) with $d = 1$ obtained by a DG scheme~(\ref{DG method}) using $k^{\textrm{th}}$ order basis functions, a uniform mesh and the upwind-biased flux $\widehat{u}_h$. Let the numerical initial condition be the interpolating polynomial $\pi^{\star}u(x,0)$ described in Lemma~\ref{lmm:interpolating special}. 
\\
Let $\xi = \frac{2}{h}x - \frac{1}{h}(x_{j+\half}+x_{j-\half})$ be the scaling between the cell $I_j$ and the canonical element $[-1,1]$. Then the error $e = u - u_h$ satisfies
\begin{eqnarray}
e(\xi,h,t) ~=~\sum_{\ell=k+1}^{\infty}Q_{\ell}(\xi,t)h^{\ell}, \quad\quad Q_{\ell}(\cdot,t)\in\mathscr{P}^{\ell}([-1,1]),
\end{eqnarray}
with 
\begin{eqnarray*}
Q_{k+1}(\xi,t)
&=&
 c_{k+1}(u,h,k,t)
R^{\star}_{k+1}(\xi) 
\\
&=& c_{k+1}(u,h,k,t)
\left(\theta R^{+}_{k+1}(\xi) + (1-\theta)R^{-}_{k+1}(\xi)\right).
\end{eqnarray*}
\end{theorem}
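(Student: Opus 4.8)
The plan is to follow the \textit{a posteriori} asymptotic analysis of~\cite{Baccouch,AdjeridBaccouch}: write down the equation satisfied by the error, rescale to the reference element, substitute a formal expansion of the error in powers of $h$, and determine the leading coefficient by matching. First I would derive the error equation. Since the exact solution $u$ is smooth and continuous across interfaces, it satisfies the weak form~(\ref{DG method}) exactly with $\widehat{u}=u$; subtracting this from the scheme for $u_h$ shows that on each cell $I_j$ the error $e=u-u_h$ obeys the same variational identity, with the upwind-biased flux of $e$, namely $\widehat{e}=\theta e^- +(1-\theta)e^+$, in the interface terms. Rescaling $x\mapsto\xi$ and testing against the Legendre polynomials $P_m$, $m=0,\dots,k$, I would record the $h$-scaling of each term: the mass (time-derivative) term carries one more power of $h$ than the stiffness and interface terms, because $\mathrm{d}x=\tfrac{h}{2}\,\mathrm{d}\xi$ while $v_x$ contributes a factor $\tfrac{2}{h}$.

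Next I would insert the ansatz $e(\xi,h,t)=\sum_{\ell\geq k+1}Q_\ell(\xi,t)h^\ell$ with each $Q_\ell$ of degree at most $\ell$ in $\xi$. This ansatz is legitimate at $t=0$: by construction $u_h(\cdot,0)=\pi^\star u(\cdot,0)$, so the initial error is exactly the interpolation error of Lemma~\ref{lmm:interpolating special}, whose leading coefficient is $c_{k+1}(0)R^\star_{k+1}(\xi)$. Collecting the $\mathcal{O}(h^{k+1})$ terms, the mass term does not contribute at this order, and I obtain, for each $m$, the purely spatial relations
\[
\int_{-1}^1 Q_{k+1}\,P_m'\,\mathrm{d}\xi = \widehat{Q}_{k+1}\,\bigl[P_m(1)-P_m(-1)\bigr],\qquad \widehat{Q}_{k+1}=\theta Q_{k+1}(1)+(1-\theta)Q_{k+1}(-1),
\]
where I use that on a uniform periodic mesh the leading coefficient is common to all cells. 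Integrating by parts, expanding $Q_{k+1}'$ in the Legendre basis, and invoking $(2n+1)P_n=P_{n+1}'-P_{n-1}'$ together with the parity rule $P_n(-\xi)=(-1)^nP_n(\xi)$ — which yield $\sum_{n=0}^k(2n+1)P_n=(R^-_{k+1})'$ and $\sum_{n=0}^k(2n+1)(-1)^nP_n=(-1)^k(R^+_{k+1})'$ — I would conclude that every solution has the form $Q_{k+1}=\alpha R^\star_{k+1}+\beta$. The special polynomial earns its definition here: a direct check gives $\widehat{R^\star_{k+1}}=\theta\cdot 2(1-\theta)+(1-\theta)(-2\theta)=0$, and $R^\star_{k+1}=P_{k+1}+(1-2\theta)P_k\perp\mathscr{P}^{k-1}$, so both sides of the relation vanish for $Q_{k+1}=R^\star_{k+1}$; this is the generalisation of the upwind condition $R^+_{k+1}(1)=0$.

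It then remains to eliminate the spurious constant $\beta$ and organise the higher-order terms. Since $R^\star_{k+1}$ is $\mathcal{L}^2$-orthogonal to $P_0$, the constant $\beta(t)$ is precisely the cell-average (zero mode) of the leading error coefficient; at $t=0$ it vanishes because the interpolation error of Lemma~\ref{lmm:interpolating special} is a \emph{pure} multiple of $R^\star_{k+1}$. Testing the error equation with $v\equiv 1$ and summing over the periodic mesh gives conservation of the mean, from which I would argue $\beta(t)\equiv 0$, leaving $Q_{k+1}(\xi,t)=c_{k+1}(t)R^\star_{k+1}(\xi)$; since $k$ is even, $(-1)^k=1$ and this equals $c_{k+1}\bigl(\theta R^+_{k+1}+(1-\theta)R^-_{k+1}\bigr)$, as claimed. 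The coefficients $Q_\ell$ for $\ell\geq k+2$ are obtained recursively: at order $h^\ell$ the mass term couples in $(Q_{\ell-1})_t$ as a source, and solvability of the resulting spatial system — together with the degree bookkeeping of Lemma~\ref{lmm:interpolating special} — produces a $Q_\ell\in\mathscr{P}^\ell$, completing the series.

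I expect the main obstacle to be twofold. The cleanest analytical difficulty is controlling the zero mode $\beta$: the interface terms couple neighbouring cells, so the translation-invariance used at leading order and the conservation argument that forces $\beta\equiv 0$ must be made precise, handling the $\mathcal{O}(h)$ cell-to-cell variation that manifests as a discrete transport of the amplitude. The second, more technical, difficulty is justifying the formal expansion itself — bounding the remainder after truncation — which I would underpin with the $\mathcal{L}^2$-stability and $\mathcal{O}(h^{k+1})$ convergence estimates of Meng et al.~\cite{Meng} rather than re-deriving them.
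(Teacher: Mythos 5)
Your proposal is correct and lives inside the same \emph{a posteriori} framework as the paper's proof (error equation with $\widehat{e}=\theta e^-+(1-\theta)e^+$, rescaling to $[-1,1]$, Maclaurin expansion in $h$, coefficient matching), but two of its key steps are executed by genuinely different means. (i) \emph{Vanishing of the low-order coefficients.} The paper never assumes the expansion starts at $h^{k+1}$: it expands $e=\sum_{\ell\geq 0}Q_\ell h^\ell$ and proves $Q_0=\dots=Q_k=0$ for all $t$ by a self-contained induction, pairing equation~(\ref{Eqn:10b}) at order $\ell$ tested with $v=\xi$ against the same equation at order $\ell+1$ tested with $v=1$ to show that $\theta Q_\ell(1,t)+(1-\theta)Q_\ell(-1,t)$ is constant in time, hence zero; orthogonality to $\mathscr{P}^{k-1}$ then kills $Q_\ell$ for $\ell<k$, and at $\ell=k$ the endpoint values $P_k(\pm 1)=1$ (for even $k$) force $b_k=0$. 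You instead postulate the truncated ansatz and justify it by analyticity in $h$ plus the $\mathcal{O}(h^{k+1})$ $\mathcal{L}^2$ theory of~\cite{Meng}; this is legitimate, but note that their optimal estimate is proved for a special global projection of the initial data, so you need one explicit extra step transferring it to the interpolant $\pi^{\star}u_0$ via $\mathcal{L}^2$-stability and the $\mathcal{O}(h^{k+1})$ initial interpolation error of Lemma~\ref{lmm:interpolating special} (you gesture at this but it should be stated). (ii) \emph{Identification of $Q_{k+1}$.} Your characterization of the solution space of the order-$(k+1)$ stationary relations as $\operatorname{span}\{1,R^{\star}_{k+1}\}$ is correct (the conditions for $m=1,\dots,k$ are independent, and both your checks $\widehat{R^{\star}_{k+1}}=0$ and $R^{\star}_{k+1}\perp\mathscr{P}^{k-1}$ match the paper's computations), and you then kill the constant by mean conservation plus the initial data. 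The paper deploys the same two ingredients in the opposite order: it first proves $\theta Q_{k+1}(1,t)+(1-\theta)Q_{k+1}(-1,t)=0$ for all $t$ — time-invariance coming from exactly the mean-conservation identity you use (order $k+2$, $v=1$), anchored at $t=0$ by $\theta R^{\star}_{k+1}(1)+(1-\theta)R^{\star}_{k+1}(-1)=0$ — and only then uses orthogonality to write $Q_{k+1}=b_{k+1}P_{k+1}+b_kP_k$ and solve for $b_k=-(2\theta-1)b_{k+1}$. What your route buys is a cleaner structural statement (a two-dimensional kernel at leading order); what the paper's route buys is complete independence from external convergence theory. Finally, the cell-coupling difficulty you flag at the end is real, but the paper disposes of it bluntly: under periodicity it assumes, without loss of generality, a one-element mesh, so the interface couples the element to itself and no translation-invariance or cell-to-cell transport argument is needed; adopting that same reduction would close the main loose end in your write-up.
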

{\em Proof}.
Without loss of generality, 
assume the
tessellation $\Omega_h$
to
comprise
a single element
$[0, h]$.
To facilitate the analysis, subtract the approximating scheme~(\ref{DG method}) from equation~(\ref{exactDG method}) to obtain a DG orthogonality condition for the error $e$:
\begin{equation}
\int_0^h e_tv~\textrm{d}x - \int_0^h aev_{x}~\textrm{d}x + a\widehat{e}v|^{x=h}_{x=0} ~=~ 0,\label{DG method, error}
\end{equation}
where $\widehat{e} = u - \widehat{u}_h.$ 
The flux terms in equation~(\ref{DG method, error}) can be evaluated using the periodicity of the boundary conditions as follows:
\begin{align}\label{ehat expansion}
\widehat{e}|_{x=0} = 
\widehat{e}|_{x=h} &= \theta(u - u_h^-)|_{x=h} + 
(1-\theta)(u - u_h^+)|_{x=h}
\\
&= \theta e|_{x=h} + 
(1-\theta)e|_{x=0}\nonumber
\end{align} 
Substitution of
the cell boundary evaluations~(\ref{ehat expansion})
into
equation~(\ref{DG method, error})
yields for all $v \in V^k_h$,
after a scaling to the canonical element $[-1,1]$,
\begin{align}\label{DG method, error,scaled}
&\frac{h}{2}\int_{-1}^1e_tv~\textrm{d}\xi - \int_{-1}^1aev_{\xi}~\textrm{d}\xi 
+  a\left(\theta e|_{\xi=1} + (1-\theta)e|_{\xi=-1}\right)\left(v(1) - v(-1)\right)  ~=~ 0.
\end{align}
Next, we reformat
equation~(\ref{DG method, error,scaled}) as a scheme for the leading order terms of the error.
\\
%
%
\textit{Step One}:\quad
The DG solution within an element is clearly analytic as a function of $h$. Since we assume an initial condition of class $\mathcal{C}^{\infty}$, the advecting exact solution is also smooth 
so the (local) DG solution is analytic
in $\xi.$ Hence we can expand the (local) error $e = u - u_h$, which is analytic, as a Maclaurin series with respect to $h$:
\begin{equation}\label{Eqn:33}
e(\xi,h,t) = \sum_{\ell=0}^{\infty}Q_{\ell}(\xi,t)h^{\ell},
\end{equation}
where $Q_{\ell}(\cdot,t) = \sum_{m=0}^{\ell}b_mP_m(\xi)$ is a polynomial of degree at most $\ell$. 
\\
Next, substitute the expansion~(\ref{Eqn:33}) into the scaled scheme~(\ref{DG method, error,scaled}) for the error and collect terms in powers of $h$ to obtain the following two equations:
\begin{subequations}
\begin{align}\label{Eqn:10a}
&- \int_{-1}^1Q_0v_{\xi}~\textrm{d}\xi 
+ \left(\theta Q_0(1,t) + (1-\theta)Q_0(-1,t)\right)\left(v(1) - v(-1)\right)  ~=~ 0;
\end{align}
\begin{align}\label{Eqn:10b}
&\frac{1}{2}\int_{-1}^1(Q_{\ell-1})_tv~\textrm{d}\xi - \int_{-1}^1aQ_{\ell}v_{\xi}~\textrm{d}\xi \\
&
+ a\left(\theta Q_{\ell}(1,t) + (1-\theta)Q_{\ell}(-1,t)\right)\left(v(1) - v(-1)\right)  ~=~ 0,
\qquad
\ell \geq 1
.\nonumber
\end{align}
\end{subequations}
Since $Q_0(\xi,t) = Q_0(t) = \theta Q_0(t) + (1 - \theta)Q_0(t)$,
the Fundamental Theorem of Calculus 
immediately satisfies equation~(\ref{Eqn:10a}).
It is from equation~(\ref{Eqn:10b}), by inductively testing against functions $v \in V^k_h$, that the rest of the argument is extracted. 
\\
%
%
\textit{Step Two}:\quad 
Substitute $\ell = 1$ in equation~(\ref{Eqn:10b}) and choose $v = 1$ 
to obtain
\begin{equation*}
\int_{-1}^1(Q_{0})_t~\textrm{d}\xi~=~(Q_{0})_t\int_{-1}^1~\textrm{d}\xi~=~0.
\end{equation*}
Thus we must have $(Q_{0})_t(\xi,t) = 0$. 
Any $(k+1)$-node interpolating
initial condition
$\pi u_0(x)$ leads to the first $k+1$ coefficients in the expansion~(\ref{Eqn:33}) vanishing initially:
\begin{equation*}
Q_{\ell}(\xi, 0)~=~0,\quad \ell = 0,\dots,k.
\end{equation*}
In particular, since $(Q_{0})_t(\xi,t) = 0$, we have
$Q_0(\xi,t)=0$
for all $t$. This last observation forms the base step for an induction on $k$ in equation~(\ref{Eqn:10b}) with the hypothesis $Q_{\ell}(\xi,t) = 0,~\ell=1,\dots,k-1.$
\\
%
%
\textit{Step Three}:\quad
To show that $Q_{k}(\xi,t) = 0$, consecutively substitute $\ell = k$ and $\ell = k+1$ in equation~(\ref{Eqn:10b}) and choose $v = \xi$ and $v = 1$ respectively to obtain in turn
\begin{subequations}
\vspace{-10pt}
\begin{align}\label{l is k v is xi}
& - \int_{-1}^1Q_k ~\textrm{d}\xi + 2\left(\theta Q_k(1,t) + (1-\theta)Q_k(-1,t)\right)  ~=~ 0,
\end{align}
\vspace{-20pt}
\begin{equation}\label{Qk sub t is zero}
\int_{-1}^1 (Q_k)_t~\textrm{d}\xi~=~0.
\end{equation} 
\end{subequations}
After differentiating equation~(\ref{l is k v is xi}) with respect to $t$,  equation~(\ref{Qk sub t is zero}) yields
\begin{align}\label{eqn:toint}
&\frac{d}{dt}\left(\theta Q_k(1,t) + (1-\theta)Q_k(-1,t)\right)  ~=~ 0.
\end{align}
Since $Q_k(\xi,0) = 0$, the
integral in time of equation~\eqref{eqn:toint} leaves, for any $t$,
\begin{align}\label{flux in 10b}
&\theta Q_k(1,t) + (1-\theta)Q_k(-1,t) ~=~ 0
\end{align}
The terms
in
equation~(\ref{flux in 10b}) feature in
equation~(\ref{Eqn:10b}) when $l = k$
thus we obtain 
\begin{equation*}
\int_{-1}^1Q_kv_{\xi}~\textrm{d}\xi~=~0,\quad v\in V^k_h. 
\end{equation*}
Hence $Q_k$ is orthogonal to all $v\in\mathscr{P}^{k-1}$ and, if we write $Q_k(\cdot,t)$ and $v(\xi)$ as sums of Legendre polynomials $P_{\ell}(\xi)$, orthogonality properties~(\ref{eqn:LegendreOrthog}) yield
\begin{equation}\label{Eqn:Qk expansion}
Q_k(\xi,t)~=~\sum_{\ell=0}^kb_{\ell}(t)P_{\ell}(\xi)~=~b_k(t)P_k(\xi).
\end{equation}
The expansion~(\ref{Eqn:Qk expansion}) must satisfy the flux condition
(\ref{flux in 10b}) when it follows
 that
\begin{align*}
&\theta b_k P_k(1) + (1-\theta)b_k P_k(-1)  ~=~ 0
\end{align*}
so, by Legendre properties~(\ref{eqn:LegendreProperties}),
$b_k = 0$. This completes the induction; 
$$Q_{\ell}(\xi,t) = 0,\quad\ell=0,1,\dots,k.$$
\textit{Step Four}:\quad
We consider the term $Q_{k+1}$ following the same process as before. That is, consecutively substitute $\ell = k+1$ and $\ell = k+2$ in equation~(\ref{Eqn:10b}) and choose $v = \xi$ and $v = 1$ respectively to obtain in turn
\begin{subequations}
\vspace{-10pt}
\begin{align}\label{l=k+1,psi=xi}
&\int_{-1}^1Q_{k+1}~\textrm{d}\xi = 2\left(\theta Q_{k+1}(1,t) + (1-\theta)Q_{k+1}(-1,t)\right),
\end{align}
\vspace{-20pt}
\begin{equation}\label{l=k+2,psi1}
\int_{-1}^1(Q_{k+1})_t~\textrm{d}\xi~=~0.
\end{equation}
\end{subequations}
Next, differentiate equation~(\ref{l=k+1,psi=xi}), equate to zero using equation~(\ref{l=k+2,psi1}) and apply the Fundamental Theorem of Calculus to obtain
\begin{equation}\label{eqn:Qk+1 periodic condition}
\theta Q_{k+1}(1,t) + (1-\theta)Q_{k+1}(-1,t)~=~\theta Q_{k+1}(1,0) + (1-\theta)Q_{k+1}(-1,0).
\end{equation}
To see that the right-hand side of equation~(\ref{eqn:Qk+1 periodic condition}) vanishes when $k$ is even, 
recall that 
the leading order term in the interpolation error $u_0(x) - \pi^{\star}u_0(x)$ 
satisfies
\begin{equation*}
Q_{k+1}(\xi,0) = c_{k+1}R^{\star}_{k+1}(\xi)
\end{equation*}
then note that, 
irrespective of the value of $k$, the following equates to zero: 
\begin{eqnarray*}
\theta R^{\star}_{k+1}(1) + (1-\theta)R^{\star}_{k+1}(-1) &=& \theta(1-\theta)\left[
(-1)^k
R^{-}_{k+1}(1) + R^{+}_{k+1}(-1)\right]
\\
&=& \theta(1-\theta)\left[2
(-1)^k 
+ 2(-1)^{k+1}\right].
\end{eqnarray*}
It follows from equation~(\ref{eqn:Qk+1 periodic condition}) that we also have, for even $k$ and for all $t\geq0$, 
\begin{equation}\label{flux Qk+1}
\theta Q_{k+1}(1,t) + (1-\theta)Q_{k+1}(-1,t)~=~0.
\end{equation}
Thus the flux terms in equation~(\ref{Eqn:10b}) with $\ell = k+1$ vanish which leaves
\begin{equation*}\label{Qk+1 orthogonal}
\int_{-1}^1Q_{k+1}v_{\xi}~\textrm{d}\xi = 0,\quad\quad v \in V^k_h.
\end{equation*}
Hence $Q_{k+1}$ is orthogonal to all $v\in\mathscr{P}^{k-1}$ and we can write, at any given $t > 0$,
\begin{equation}\label{Qk+1}
Q_{k+1}(\xi,t)~=~b_{k+1}(t)P_{k+1}(\xi) + b_{k}(t)P_{k}(\xi).
\end{equation}
If we require of the expansion~(\ref{Qk+1}) the conditions~(\ref{flux Qk+1}) then we must satisfy
\begin{align*}
\theta\left[b_{k+1} + b_{k}\right] + (1 - \theta)\left[(-1)^{k+1}b_{k+1} + (-1)^kb_{k}\right]~=~0,
\end{align*}
From which
it follows that 
\begin{align*}
b_{k} = \begin{cases} -(2\theta - 1) b_{k+1}, &\quad \textrm{when } k \textrm{ is even}
\\
-\frac{1}{2\theta - 1}b_{k+1}, &\quad \textrm{when } k \textrm{ is odd.}
\end{cases}
\end{align*}
In the case of an even polynomial degree $k$, we have
\begin{eqnarray*}
Q_{k+1}(\xi,t) ~&=&~ 
b_{k+1}P_{k+1}(\xi)  + (1 - 2\theta)b_{k+1}P_k(\xi) 
\\
&=&
b_{k+1}\left[\theta R^+_{k+1}(\xi) + (1-\theta)R^-_{k+1}(\xi)\right],
\end{eqnarray*}
where $b_{k+1}$ depends on $t$.
\qquad\qedhere
%
%
%
%
\begin{remark}\label{rem:end pointwise}\normalfont
When $u_h(x,0) = \pi^{\star}u_0(x)$ interpolates $u_0(x)$ at the roots of $R^{\star}_{k+1}(x)$, the coefficient of the term on the order of $h^{k+1}$ in the series for the initial error satisfies
\begin{equation}\label{eqn:inherent condition}
\theta Q_{k+1}(1,0) + (1-\theta)Q_{k+1}(-1,0) = 0.
\end{equation}
In the proof of Theorem~\ref{Thm:pointwise}, this relation was extended to $t > 0$.  
For odd $k$, 
optimal construction of $\pi^{\star}u_0(x)$ is hindered
by the root of $R^{\star}_{k+1}(\xi)$ which lies outside $[-1,1]$.
If, however, we were able to satisfy equation~(\ref{eqn:inherent condition}) for odd $k$ then 
we would 
get
\begin{eqnarray*}
Q_{k+1}(\xi,t) ~&=&~ \frac{1}{2\theta-1}\left[
(2\theta-1)b_{k+1}P_{k+1}(\xi)  -  b_{k+1}P_k(\xi)\right]
\\
&=&\frac{1}{2\theta - 1}
b_{k+1}\left[\theta R^+_{k+1}(\xi) - (1-\theta)R^-_{k+1}(\xi)\right]
=\frac{b_{k+1}}{2\theta - 1}R_{k+1}^{\star}(\xi).
\end{eqnarray*}
\end{remark}\noindent
For simplicity of exposition, 
Theorem~\ref{Thm:pointwise} 
was restricted to the one-dimensional case but extension of the results to multiple dimensions, when the approximation space consists of piecewise continuous tensor polynomials, is reasonably straightforward. 
\\
In the next section, we will compliment
our
pointwise observations
with an analysis of the global error. When considering the global error, we are able to extract the full $\mathcal{O}\left(h^{2k+1}\right)$ superconvergence rate using 
SIAC
filtering regardless of
the parity of $k$.
%
%

%
\section{SIAC filtered error estimation}
\label{sec:SIACerror}
The hidden local accuracy of the DG solution, discussed in 
$\S3$,
may be extracted to a global measure by applying the  smoothness-increasing accuracy-conserving (SIAC) filter introduced by \cite{SRV}. In this section, we show that
$\mathcal{O}(h^{2k+1})$
superconvergent accuracy 
in the negative-order norm, as is observed (\cite{Jietal}) for the upwind flux, still occurs when the upwind-biased DG method is used to solve linear hyperbolic conservation laws. To begin, 
we observe that an error bound in the $\mathcal{L}^2$-norm follows from a negative-order norm error estimate.
Let 
$$u_h^{\star} = \mathcal{K}^{(2k+1,k+1)}_h\star u_h$$ 
be the DG solution to equation~(\ref{eqn:linearhyperbolicsystem}) post-processed by convolution kernel at the final time.   Denote by $e_h = u - u_h$ the usual DG error and 
consider the $\mathcal{L}^2$-norm of the error $e_h^{\star}\coloneqq u - u_h^{\star}$ associated with the filtered solution:
\begin{equation}\label{eqn:L2 of estar}
\lVert u - u_h^{\star}\rVert_0 = \lVert u - \mathcal{K}_h\star u\rVert_0 + \lVert \mathcal{K}_h\star u - u_h^{\star}\rVert_0.
\end{equation}
The first term on the right-hand side of~(\ref{eqn:L2 of estar}) is bounded by $Ch^{r+1}$ from the integral form of Taylor's theorem and from the reproduction of polynomials property of the convolution (Lemma $5.1$, \cite{Jietal}). Thus we need only consider the second term for which
\begin{equation}
\lVert\mathcal{K}_h\star u - u_h^{\star}\rVert_0 = \lVert\mathcal{K}_h\star e_h\rVert_0 \leq \sum_{|\alpha|\leq\ell}\lVert D^{\alpha}(\mathcal{K}_h\star e_h)\rVert_0
\leq \sum_{|\alpha|\leq \ell}\lVert\tilde{\mathcal{K}}_h\rVert_1\lVert \partial_h^{\alpha}e_h\rVert_{-\ell}\label{eqn:36}
\end{equation}
by kernel properties of the $\alpha^{\textrm{th}}$ derivative $D^{\alpha}$, the kernel's relation to the divided difference $\partial^{\alpha}$ and by Young's inequality for convolutions. The tilde on $\tilde{\mathcal{K}}_h$ in inequality~(\ref{eqn:36}) signals that the kernel uses $\psi^{(\ell-\alpha)}$, which is a result of the property $D^{\alpha}\psi^{(\ell)} = \partial_h^{\alpha}\psi^{(\ell-\alpha)}$. 

Note that $\lVert\tilde{\mathcal{K}}_h\rVert_{\ell} = \sum_{i=0}^r|c_{i}|$ is just the sum of the kernel coefficients so we only need to show that $\lVert\partial^{\alpha}_he_h\rVert_{-\ell} \leq Ch^{2k+1}$. Furthermore, the formulation of the DG scheme for the solution is similar to that for the divided differences   
\begin{equation}
\lVert\partial^{\alpha}_h(u - u_h)\rVert_{-\ell,\Omega} \leq C\lVert\partial_h^{\alpha}u_0\rVert_{\ell,\Omega}h^{2k+m},\quad m\in\left\lbrace 0,
1/2,1\right\rbrace.
\end{equation}
\cite{CockLuskShuSuli}.
This allows us to only have to consider the negative-order norm of the solution itself; superconvergent accuracy in the negative-order norm gives superconvergent accuracy in the $\mathcal{L}^2$-norm for the post-processed solution. The following result provides the required negative-order norm error estimate.
\begin{remark}
Notice that the superconvergent points for the upwind-biased scheme, as described in the one-dimensional case in Lemma~\ref{lmm:interpolating special}, 
change with the value of $\theta$. However, the global superconvergence in the negative-order norm occurs regardless of the value of $\theta$. Furthermore, the proof of the following result does not differ between odd and even polynomial degrees. 
\end{remark}
%
%
%
%
\begin{theorem}\label{Thm:Filtered Thm}
Let $u_h$ be the numerical solution to
equation~(\ref{eqn:linearhyperbolicsystem})
with smooth initial condition obtained via a
DG
scheme~(\ref{DG method}) with upwind-biased flux.
Then 
\begin{equation}
\left\lVert \partial^{\alpha}_h\left(u - u_h\right)(T)\right\rVert_{-\ell,\Omega}~\leq~C(u_0,\theta,T)h^{2k+1},\quad\quad\alpha < \ell.
\end{equation}
\end{theorem}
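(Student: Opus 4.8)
The plan is to follow the duality (negative-norm) machinery of Cockburn, Luskin, Shu and S\"uli~\cite{CockLuskShuSuli}, as sharpened in~\cite{Jietal,ji2013negative}, and to check at every step that the upwind-biased flux enters only the constant. First I would reduce the divided-difference estimate to a plain error estimate. Because the mesh is uniform and the coefficients $a_i$ are constant, the operator $B(\cdot;\cdot)$ commutes with the lattice shift, so $\partial_h^{\alpha}u_h$ solves exactly the same DG scheme~(\ref{DG method}) with initial datum $\partial_h^{\alpha}u_0$ --- this is the observation recorded just before the theorem. Hence it suffices to prove $\|(u-u_h)(T)\|_{-\ell,\Omega}\le C(u_0,\theta,T)h^{2k+1}$ for the plain error $e=u-u_h$ and then replace $u_0$ by $\partial_h^{\alpha}u_0$ (controlled, for fixed $\alpha<\ell$, by finitely many derivatives of $u_0$) in the constant. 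By the definition of the negative-order norm this in turn reduces to bounding $(e(T),\Phi)_\Omega$ uniformly over $\Phi\in\mathcal{C}^{\infty}_0(\Omega)$ with $\|\Phi\|_{\ell,\Omega}=1$.

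Next I would set up the dual problem. Since~(\ref{eqn:linearhyperbolicsystem}) is linear transport with constant coefficients, the backward adjoint problem with terminal data $\varphi(\cdot,T)=\Phi$ has a solution that is exactly as smooth as $\Phi$, with $\sup_{0\le t\le T}\|\varphi(\cdot,t)\|_{s,\Omega}\le C\|\Phi\|_{s,\Omega}$ for every $s$. Writing $(e(T),\Phi)_\Omega=(e(T),\varphi(T))_\Omega$ and integrating $\tfrac{d}{dt}(e,\varphi)_\Omega$ over $[0,T]$, I would test the error equation $(e_t,v)_\Omega+B(e;v)=0$ against $v=\mathbb{P}_h\varphi\in V_h^k$ and combine it with the dual equation satisfied by $\varphi$. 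Galerkin orthogonality then lets me replace $\varphi$ by $\varphi-\mathbb{P}_h\varphi$ in the $B$-terms, while consistency of the upwind-biased flux on the continuous exact solution (so that $\hat f(u)=f(u)$) makes the exact-solution contributions cancel. The outcome is a representation of the form
\[
(e(T),\Phi)_\Omega = (e(0),\varphi(0))_\Omega - \int_0^T B\big(e(\cdot,t);\,\varphi-\mathbb{P}_h\varphi\big)\,dt .
\]

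It then remains to estimate the two pieces. Taking $u_h(\cdot,0)=\mathbb{P}_h u_0$, the initial term equals $(u_0-\mathbb{P}_h u_0,\varphi(0)-\mathbb{P}_h\varphi(0))_\Omega$ by orthogonality, hence is $O(h^{2k+2})$ by standard approximation theory. For the space-time term I would expand $B(e;\varphi-\mathbb{P}_h\varphi)$ into its volume and interface contributions and bound them using the optimal $\|e(\cdot,t)\|_\Omega\le Ch^{k+1}$ convergence of the upwind-biased scheme from Meng et al.~\cite{Meng}, together with the projection bounds $\|\varphi-\mathbb{P}_h\varphi\|_\Omega\le Ch^{k+1}\|\varphi\|_{k+1,\Omega}$ and $\|(\varphi-\mathbb{P}_h\varphi)_x\|_\Omega\le Ch^{k}\|\varphi\|_{k+1,\Omega}$, plus trace and inverse inequalities for the jump terms. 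The volume contribution $a(e,(\varphi-\mathbb{P}_h\varphi)_x)_\Omega$ pairs $O(h^{k+1})$ against $O(h^{k})$ and therefore fixes the overall rate at $h^{2k+1}$, while the flux terms carry factors $\theta$ and $1-\theta$ that I would absorb into $C(u_0,\theta,T)$.

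The hard part will be controlling the inter-element flux terms of $B$ for $\theta<1$, where monotonicity is lost. I expect the decisive ingredient to be the uniform $\mathcal{L}^2$-stability of the upwind-biased operator established in~\cite{Meng}, whose jump dissipation is governed by the factor $2\theta-1$; isolating the $\theta$-dependence there, and verifying that neither the parity of $k$ nor the value of $\theta$ migrates into the exponent, is exactly the point that makes the constant --- but not the rate $2k+1$ --- sensitive to the choice of flux.
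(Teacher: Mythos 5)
Your proposal is correct and is essentially the paper's own argument: the paper runs the Cockburn--Luskin--Shu--S\"{u}li duality machinery, splitting $(u-u_h,\Phi)_{\Omega}(T)$ into an initial projection term bounded by $C_1h^{2k+2}$, a consistency term $\Theta_3$ that vanishes identically, and the term $-\int_0^T B(u_h;\phi-\mathbb{P}_h\phi)\,dt$, whose interface contributions are bounded by Cauchy--Schwarz using the $O(h^{k+1})$ jump control inherited from the optimal error estimate of Meng et al.\ together with trace estimates for $\phi-\mathbb{P}_h\phi$, yielding a constant $C_2=C_2^{+}+(1-\theta_{\min})C_2^{-}$ exactly as you predict. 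The only slip is in your representation formula, which should carry $-\int_0^T B(u_h;\varphi-\mathbb{P}_h\varphi)\,dt$ rather than $-\int_0^T B(e;\varphi-\mathbb{P}_h\varphi)\,dt$; the discrepancy is $B(u;\varphi-\mathbb{P}_h\varphi)=\bigl(a(u_x-\mathbb{P}_h u_x),\varphi-\mathbb{P}_h\varphi\bigr)_{\Omega}=O(h^{2k+2})$ by double orthogonality, so every bound in your plan goes through unchanged.
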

%
%
{\em Proof}.  The proof is quite similar to \cite{CockLuskShuSuli}. In the following, we point out the differences.
For simplicity, we consider the case when $\alpha = 0$. The case for $\alpha > 0$ is similar (\cite{CockLuskShuSuli,yangyangNegative}).
In order to extract information about the error at the final time, we work with the dual equation: find a continuous and analytic 
$\phi(\boldsymbol{x},t)$ such that
\vspace{-15pt}\begin{equation}\label{Eqn:Dual equation}\vspace{-15pt}
\phi_t + \sum_{i=1}^da_i\phi_{x_i} = 0;\quad\quad\phi(\boldsymbol{x},T) = \Phi(\boldsymbol{x}),\quad\quad (\boldsymbol{x},t) \in \Omega\times[0,T).
\end{equation}
Thus we can estimate the term appearing in the definition of the negative-order norm as
\vspace{-10pt}
\begin{equation}\label{Theta123}\vspace{-10pt}
(u - u_h,\Phi)_{\Omega}(T) = \Theta_1 + \Theta_2 + \Theta_3
\end{equation}
where
the terms on the right-hand side of equation~(\ref{Theta123}) can be estimated individually:
\vspace{-15pt}\begin{align}\vspace{-15pt}
&\Theta_1~=~(u - u_h,\phi)_{\Omega}(0);\nonumber
\\
&\Theta_2 ~=~ - \int_0^T\left[\left((u_h)_t, \phi-\chi\right)_{\Omega} + B(u_h,\phi-\chi)\right]~\textrm{d}t;\nonumber
\\
&\Theta_3 ~=~ - \int_0^T\left[\left(u_h, \phi_t\right)_{\Omega} - B(u_h,\phi)\right]~\textrm{d}t.\nonumber
\end{align}
%
%
%
%
We summarize the estimates for each of these terms below.
\subsubsection*{Estimating the term $\Theta_1$}
The bounding of this projection term is no different to that in~\cite{Jietal} and is given by 
\vspace{-10pt}\begin{equation}\vspace{-10pt}
|\Theta_1| 
\leq C_1h^{2k+2}\lVert u_0\rVert_{k+1}\lVert\phi(0)\rVert_{k+1}.
\end{equation}
\subsubsection*{Estimating the term $\Theta_3$}
The estimate for $\Theta_3$ also does not differ from previous analysis and hence we have
\begin{eqnarray}
|\Theta_3| &=& \left|\int_0^T\left(u_h, \phi_t\right)_{\Omega} - B(u_h;\phi)~\textrm{d}t\right|~=~0.
\end{eqnarray}
%
%
%
\subsubsection*{Estimating the term $\Theta_2$}
In contrast to $\Theta_1$ and $\Theta_3$, estimation of this term is affected by the choice of flux parameter $\theta$ and so we provide more detail. 

Let $\mathbb{P}_h\phi$ be the projection onto the approximation space. Then since the projection error $\phi - \mathbb{P}_h\phi$ is orthogonal to the approximation space, it holds that
$$\left((u_h)_t, \phi-\mathbb{P}_h\phi\right)_{\Omega} = 0.$$ 
This leaves us only needing to bound one term: $$\Theta_2 = - \int_0^TB(u_h;\phi-\mathbb{P}_h\phi)~\textrm{d}t.$$
Denote by $[\![\phi]\!]_{\partial S} = \phi^R_{\partial S}\textbf{n}_R  + \phi^L_{\partial S}\textbf{n}_L$ the jump in $\phi$. When we choose $\widehat{u}_h$ to be the upwind-biased flux, the integrand in the above equation becomes
\begin{align}
B(u_h;\phi-\mathbb{P}_h\phi) = \sum_S\int_{\partial S}[\![ u_h]\!]_{\partial S}\left((\phi - \mathbb{P}_h\phi^R_{\partial S}\textbf{n}_R) - (1-\theta)[\![\phi-\mathbb{P}_h\phi]\!]_{\partial S}\right)~\textrm{d}s.
\end{align}
Let $\theta_{\min} = \min\{\theta_1,\dots,\theta_d\}$. Then 
\begin{align}
|\Theta_2| &= \left|\int_0^TB(u_h;\phi-\mathbb{P}_h\phi)~\textrm{d}t\right|
\\
&\leq C_a\left(\frac{1}{h}\int_0^T\lVert  [\![ u_h]\!]
 \rVert^2_{\Omega}\textrm{d}t\right)^{1/2}
\left(\int_0^T\lVert \phi-\mathbb{P}_h\phi^R
\textbf{n}_R\rVert^2_{\partial\Omega}\textrm{d}t\right)^{1/2}
\nonumber\\
&+(1-\theta_{\min})C_a\left(\frac{1}{h}\int_0^T\lVert  [\![ u_h]\!]\rVert^2_{\Omega}\textrm{d}t\right)^{1/2}
\left(\frac{1}{h}\int_0^T\lVert  [\![ \phi-\mathbb{P}_h\phi]\!]\rVert^2_{\partial\Omega}\textrm{d}t\right)^{1/2}\nonumber
\\
&\leq C_a\left(C_u\frac{1}{h}h^{2(k+1)}\right)^{1/2}\left[C_qh^{k+\half}\lVert\phi\rVert_{k+1} 
+C_p(1-\theta_{\min})h^{k+\half}\lVert\phi\rVert_{k+1}\right]\nonumber
\\
&=~C_aC_u\left[C_q + (1-\theta_{\min})C_p\right]h^{2k+1}\lVert\phi\rVert_{k+1}(T) 
\nonumber
\\
&=~C_2h^{2k+1}\lVert\Phi\rVert_{k+1},\nonumber
\end{align}
where the constant $C_2 = C^{+}_{2} + (1-\theta_{\min})C^-_{2}$ depends on $\theta$. 
\\
Combining the estimates and using the periodicity of the boundary conditions, we conclude with a bound on the numerator in the definition of the negative-order norm:
\begin{eqnarray}
(u - u_h,\Phi)_{\Omega}(T) &= \Theta_1 + \Theta_2 + \Theta_3 \label{eqn:ineq}
\\
&\leq C_1h^{2k+2}\lVert u_0\rVert_{k+1}\lVert\phi(0)\rVert_{k+1}
+ C_2h^{2k+1}\lVert\Phi\rVert_{k+1}.
\nonumber\qquad\qedhere
\end{eqnarray}
%
%
%
%
\begin{remark}
The
penalty for
using the new flux
is limited to a contribution to the constant attached to the order term in the negative-order norm error estimate
and
we can extract the same global order of accuracy, $\mathcal{O}\left(h^{2k+1}\right)$, for any polynomial degree $k$.
This is in contrast to the changing local behaviour seen in the pointwise analysis in $\S$\ref{sec:Pointwise}. 
\end{remark}\noindent

To complete our description of how the superconvergent properties of the upwind-biased DG solution change with the value of $\theta$,
we conduct a short 
analysis of the eigenvalues of the amplification matrix associated with the spatial discretization.
%
%
%
%

%
\section{Dispersion analysis}
\label{sec:Dispersion}
A further 
analysis of
DG
methods which has proved fruitful in recent years 
follows a Fourier approach (\cite{Guo,NumRes,ChengLi2014}).
%
The choice of initial condition and basis functions can be crucial in obtaining optimal results. Recent work that demonstrates the importance of this choice includes~\cite{ChengLi2014,yangshu2012} and \cite{cao2014superconvergence}. 
In what follows, we analyse the eigenvalues, which are independent of the choice of basis.
\\
Consider the local DG solution 
$$u_h(x(\xi),t)|_{I_j} = \sum_{\ell=0}^{k}u^{(\ell)}_j(t)\phi^{\ell}_j(\xi),
\qquad
\phi^{\ell}_j(\xi)\in V^k_h,
$$
to equation~(\ref{eqn:linearhyperbolicsystem}) with $d = 1$, periodic boundary conditions and a uniform mesh.
Denote by $\boldsymbol{u}_j$ the vector whose entries are the coefficients $u^{(\ell)}_j(t),~\ell=0,\dots,k$.
Then the DG scheme with upwind-biased flux can be written as the following semi-discrete system of ODEs:
\begin{equation}\label{eqn:semi-discrete scheme}
\frac{d}{dt}\boldsymbol{u}_j ~=~ \frac{a}{h}\left[(A_1+\theta A_2)\boldsymbol{u}_j + \theta B\boldsymbol{u}_{j-1} + (1-\theta)C\boldsymbol{u}_{j+1}\right],
\end{equation}
where $A = A_1+\theta A_2, B$ and $C$ are $(k+1)\times(k+1)$ matrices. 
%
%
%
%
Note that the term $C\boldsymbol{u}_{j+1}$ from the right-neighbour cell is a new contribution when $\theta < 1$.
%
%
As in~\cite{NumRes} and~\cite{Guo}, the coefficient vectors can be transformed to Fourier space via the assumption 
\begin{equation}\label{Fourier assumption}
\boldsymbol{u}_j(t) = e^{i\omega x_j}\hat{\boldsymbol{u}}_{\omega}(t),
\end{equation}
where $\omega$ is the wave number, $i = \sqrt{-1}$ and $x_j$
is the element center, to obtain a global coefficient vector $\hat{\boldsymbol{u}}_{\omega}$. Substitution of the ansatz~(\ref{Fourier assumption}) into the scheme~(\ref{eqn:semi-discrete scheme}) gives
\begin{equation}\label{ODE for uHat}
\frac{d}{dt}\hat{\boldsymbol{u}}_{\omega} = aG(\omega, h)\hat{\boldsymbol{u}}_{\omega},
\end{equation}
where 
\begin{equation}
G(\omega, h) = \frac{1}{h}\left(A + \theta Be^{-i\omega h} + (1-\theta)Ce^{i\omega h} \right)
\end{equation}
is called the amplification matrix.
If $G$ is diagonalisable then it has a full set of eigenvalues $\lambda_1, \dots, \lambda_{k+1}$ and corresponding eigenvectors $\Lambda_1, \dots, \Lambda_{k+1}$.
%
%
\\
Using Mathematica to computationally perform an asymptotic analysis on $\zeta = \omega h = 0$, we can obtain the following sets of eigenvalues $\lambda_j$ of the amplification matrix
$G$:
\begin{eqnarray*}
k = 0:~&&\lambda_1 = -i\omega~-~\half\left(2\theta-1\right)\omega^2h~+~\mathcal{O}\left(h^2\right);
\\\nonumber\\
k = 1:~&&\begin{cases}
\lambda_1 = -i\omega-\frac{1}{72}\frac{1}{2\theta-1}\omega^4h^3
-\frac{i}{270}\frac{1 + 6\theta-6\theta^2}{(1-2\theta)^2}\omega^5h^4 + \mathcal{O}\left(h^5\right),\\\\
\lambda_2 =-\frac{6(2\theta-1)}{h} + 3i\omega + (2\theta - 1)\omega^2h + \mathcal{O}\left(h^2\right);
\end{cases}
\\\nonumber\\
k = 2:~&&\begin{cases}
\lambda_1 =-i\omega-\frac{2\theta - 1}{7200}\omega^6h^5 
+ \frac{i}{300}\left[\theta^2-\theta+\frac{1}{14}\right]\omega^7h^6 +\mathcal{O}\left(h^7\right),\\\\
\lambda_{2,3} =~-\frac{3(2\theta-1)}{h}\pm i\sqrt{51 + 36\theta - 36\theta^2}\omega+\mathcal{O}\left(h\right);
\end{cases}\label{G1 taylor eigenvalues}
\\\nonumber\\
k = 3:~&&
\lambda_1 =-i\omega-\frac{3.125\times 10^{-4}}{441(2\theta - 1)}\omega^8h^7~
\\
&&- \frac{1.25\times 10^{-3}}{27783}\frac{19-48\theta+28\theta^2}{(1-2\theta)^2}\omega^9h^8+\mathcal{O}(h^9).
\\\nonumber
\end{eqnarray*}
%
%
%
For each value of $k$, the eigenvalue $\lambda_1$ has physical relevance, approximating $-i\omega$ with dispersion error on the order of $h^{2k+1}$ and dissipation error on the order of $h^{2k+2}$. This is consistent with the previous findings of \cite{Ainsworth,Sherwin,HuAtkins02,HeLiQiu,NumRes} and \cite{Guo}.
The coefficient of the leading order real term of the physically relevant eigenvalues $\lambda_1$ is negative. While for even $k$ this coefficient vanishes in the limit $\theta\rightarrow\half$, for odd $k$, due to the factor $(2\theta-1)^{-1}$, it grows without bound with reducing values of $\theta$. This (blow-up) behaviour is amplified in the coefficient of $h^{2k+2}$. Note that such differences between odd and even $k$ do not manifest when $\theta = 1$ since then $2\theta -1 = 1$.
\\
The remaining eigenvalues are non-physically relevant but have negative real part on the order of $\frac{1}{h}$.
Thus 
the corresponding eigenvectors in the solution
are 
damped over time,
which 
occurs slowly
for lower values of $\theta$.
For the case $k = 3$,
the findings are consistent with the other cases but
the algebra involved in the computation
becomes prohibitively substantial and the need to evaluate components numerically makes it particularly difficult to obtain tidy 
expressions for the coefficients.
%
%
%
\begin{remark}
While the eigenvalues are independent of the choice of basis functions, one must make an appropriate choice of interpolating initial condition and basis functions in order to extract superconvergent accuracy in the eigenvectors. If one uses a Lagrange-Radau basis on roots of $R^+_{k+1}(\xi),$ the appropriate choice when $\theta = 1$, in the case $k = 1$ the physically relevant eigenvector satisfies
\begin{equation}
C_1\Lambda_1 - \hat{\boldsymbol{u}}(0)~=~\left[\begin{array}{c}
-\frac{1-\theta}{18(1-2\theta)}\omega^2 h^2~+~ \frac{11-11\theta + 2\theta^2}{324(1-2\theta)^2}i\omega^3 h^3~+~\mathcal{O}(h^4)
\\
\frac{1-\theta}{6(1-2\theta)}\omega^2 h^2~+~ \frac{1-25\theta + 22\theta^2}{108(1-2\theta)^2}i\omega^3 h^3~+~\mathcal{O}(h^4)

\end{array}\right],
\end{equation}
while similarly the non-physically relevant eigenvector satisfies
\begin{equation}
C_2\Lambda_2 ~=~\left[\begin{array}{c}
\frac{1-\theta}{18(1-2\theta)}\omega^2 h^2~-~ \frac{11-11\theta + 2\theta^2}{324(1-2\theta)^2}i\omega^3 h^3~+~\mathcal{O}(h^4)
\\
-\frac{1-\theta}{6(1-2\theta)}\omega^2 h^2~-~ \frac{1-25\theta + 22\theta^2}{108(1-2\theta)^2}i\omega^3 h^3~+~\mathcal{O}(h^4)

\end{array}\right].
\end{equation}
The leading order terms vanish only when $\theta = 1$ when the interpolation points coincide with the superconvergent points of the scheme. Numerical results suggest that for $k = 2$, when we have $k+1$ roots of $R^{\star}_{k+1}(\xi)$, we are able to obtain the optimal $\mathcal{O}\left(h^{2k+1}\right)$ accuracy by using $u_h(x,0) = \pi^{\star}_{k+1}u_0(x)$. We leave as further work construction of an appropriate basis in the case of odd polynomial degree.
\end{remark}\noindent
%
%
%
%

%
\newpage
\section{Numerical Experiments}
\label{sec:Numerical}
%

We present a numerical discussion for the test equation
\vspace{-10pt}\begin{eqnarray}\vspace{-10pt}
u_t + u_x = 0,\quad\quad (x,t)\in[0,2\pi]\times(0,T],
\label{eqn:testeqn}
\\
u(x,0) = \sin(x),\quad u(0,T) = u(2\pi,T)\nonumber
\end{eqnarray}
solved by the DG scheme with upwind-biased flux paired with the Strong Stability Preserving (SSP) Runge Kutta SSP$(3,3)$ timestepping method described in \cite{gottlieb2011strong}.  The {\sf cfl} is taken so that spatial errors dominate.

Figures~\ref{fig:DiscErrk2}
 and \ref{fig:DiscErrk3} show the DG discretization errors on a grid of $N = 10$ elements for various values of $\theta$ and for polynomial degrees $k = 2,3.$ Marked by the red crosses are the theoretical superconvergent points which are roots of $R^{\star}_{k+1}(\xi)$ and which change with the value of $\theta\in\left(\half,1\right]$. The error curves cross the zero axis near these roots. In~\cite{adjerid}, Adjerid et al. commented that the intersection points align more closely as $k$ increases and we observe that here too. 

For cases with even polynomial degree,
we observe $k+1$ superconvergent points while for the odd cases, in general, the error curves cross the zero axis only $k$ times.
Furthermore, as the value of $\theta$ reduces, we see an overall reduction in the magnitude of the errors for even $k$. On the other hand, when $k$ is odd the magnitude of the errors in general increases for smaller values of $\theta$.
%
Tables 
\ref{tab:k=2} and \ref{tab:k=3}  
illustrate the $\mathcal{O}(h^{k+1})$ accuracy of the DG solution in the $\mathcal{L}^2$- and $\mathcal{L}^{\infty}$-norms. After post-processing by the SIAC filter, we observe the $\mathcal{O}\left(h^{2k+1}\right)$ accuracy in the $\mathcal{L}^2$-norm described in $\S$\ref{sec:SIACerror} and we also see $\mathcal{O}\left(h^{2k+1}\right)$ accuracy in the $\mathcal{L}^{\infty}$-norm. 
For odd $k$, convergence to the expected orders is slower for lower values of $\theta$ but is eventually achieved. Furthermore, if one compares the same degrees of mesh refinement for decreasing values of $\theta$, one observes increasing errors for odd $k$ and reducing errors for even $k$. For the post-processed solution, this is due in large part to the contribution of $\theta$ to the constant attached to the order term in the error estimate of Theorem~\ref{Thm:Filtered Thm}.

The highly oscillatory nature of the DG solution, indicating the existence of the hidden superconvergent points, can be seen in Figures~\ref{fig:Errorsk2} and \ref{fig:Errorsk3} alongside the post-processed solutions which have increased smoothness and improved accuracy. The reduced numerical viscocity enforced by the upwind-biased flux is evident when comparing plots for $\theta = 1$ and $\theta = 0.55$.

%
%
%
%
\vspace{-10pt}
\begin{table}
\begin{center}\footnotesize
\caption{$\mathcal{L}^{2}-$ and $\mathcal{L}^{\infty}$-norms of errors before and after post-processing for case $k = 2$.}
\vspace{-10pt}
\tabcolsep=0.06cm
\begin{tabular}{|r||c|c|c|c||c|c|c|c|}
\hline
 &\multicolumn{4}{c|}{$\mathscr{P}^2$: Before filter}&\multicolumn{4}{|c|}{$\mathscr{P}^2$: After filter}\\
\cline{2-9}
mesh &$\mathcal{L}^{2}$ error&order&$\mathcal{L}^{\infty}$error&order&$\mathcal{L}^{2}$ error&order&$\mathcal{L}^{\infty}$error&order\\
\hline
\multicolumn{9}{|c|}{$\theta = 1$}\\
\hline\hline
$ 10$&$8.59$E$-04$&-&$3.02$E$-03$&$-$&$1.43$E$-04$&$-$&$2.04$E$-04$&$-$\\
$ 20$&$1.06$E$-04$&$3.00$&$3.66$E$-03$&$3.04$&$2.52$E$-06$&$5.83$&$3.85$E$-06$&$5.83$\\
$ 40$&$1.33$E$-05$&$2.99$&$4.62$E$-05$&$2.98$&$4.46$E$-08$&$5.81$&$6.34$E$-08$&$5.82$\\

\hline
\multicolumn{9}{|c|}{$\theta = 0.85$}\\
\hline\hline
$ 10$&$7.35$E$-04$&-&$2.61$E$-03$&$-$&$1.41$E$-04$&$-$&$2.01$E$-04$&$-$\\
$ 20$&$9.03$E$-05$&$3.02$&$3.10$E$-04$&$3.07$&$2.44$E$-06$&$5.86$&$3.47$E$-06$&$5.86$\\
$ 40$&$1.12$E$-05$&$3.00$&$3.85$E$-05$&$3.00$&$4.19$E$-08$&$5.86$&$5.95$E$-08$&$5.86$\\

\hline

\multicolumn{9}{|c|}{$\theta = 0.55$}\\
\hline\hline
$ 10$&$5.66$E$-04$&-&$1.46$E$-03$&$-$&$1.36$E$-03$&$-$&$1.93$E$-04$&$-$\\
$ 20$&$6.97$E$-05$&$3.01$&$1.86$E$-04$&$2.97$&$2.26$E$-06$&$5.91$&$3.20$E$-06$&$5.91$\\
$ 40$&$8.70$E$-06$&$3.00$&$2.31$E$-05$&$3.00$&$3.63$E$-08$&$5.95$&$5.15$E$-08$&$5.96$\\

\hline
\end{tabular}\label{tab:k=2}
\vspace{-10pt}
\end{center}
\end{table}
\begin{table}\footnotesize
\begin{center}
\caption{$\mathcal{L}^{2}-$ and $\mathcal{L}^{\infty}$-norms of errors before and after post-processing for case $k = 3$.}
\vspace{-10pt}
\tabcolsep=0.06cm
\begin{tabular}{|r||c|c|c|c||c|c|c|c|}
\hline
 &\multicolumn{4}{c|}{$\mathscr{P}^3$: Before filter}&\multicolumn{4}{|c|}{$\mathscr{P}^3$: After filter}\\
\cline{2-9}
mesh &$\mathcal{L}^{2}$ error&order&$\mathcal{L}^{\infty}$error&order&$\mathcal{L}^{2}$ error&order&$\mathcal{L}^{\infty}$error&order\\
\hline
\multicolumn{9}{|c|}{$\theta = 1$}\\
\hline\hline
$ 10$&$2.35$E$-04$&-&$1.91$E$-04$&$-$&$1.61$E$-05$&$-$&$2.28$E$-05$&$-$\\
$ 20$&$1.30$E$-05$&$4.16$&$1.06$E$-05$&$4.16$&$6.97$E$-08$&$7.86$&$9.81$E$-08$&$7.86$\\
$ 40$&$8.67$E$-07$&$3.91$&$7.33$E$-07$&$3.86$&$3.34$E$-10$&$7.69$&$4.72$E$-10$&$7.69$\\

\hline
\multicolumn{9}{|c|}{$\theta = 0.85$}\\
\hline\hline
$ 10$&$2.74$E$-04$&-&$2.18$E$-04$&$-$&$1.61$E$-05$&$-$&$2.28$E$-05$&$-$\\
$ 20$&$1.63$E$-05$&$4.06$&$1.31$E$-05$&$4.06$&$6.94$E$-08$&$7.86$&$9.82$E$-08$&$7.86$\\
$ 40$&$1.07$E$-06$&$3.92$&$8.81$E$-07$&$3.89$&$3.34$E$-10$&$7.69$&$4.73$E$-10$&$7.69$\\
\hline
\multicolumn{9}{|c|}{$\theta = 0.55$}\\
\hline\hline
$ 10$&$4.04$E$-04$&-&$2.65$E$-04$&$-$&$1.61$E$-05$&$-$&$2.28$E$-05$&$-$\\
$ 20$&$4.99$E$-05$&$3.01$&$3.22$E$-05$&$3.04$&$6.96$E$-08$&$7.85$&$9.85$E$-08$&$7.85$\\
$ 40$&$4.72$E$-06$&$3.40$&$2.97$E$-06$&$3.43$&$3.39$E$-10$&$7.68$&$4.80$E$-10$&$7.68$\\
\hline
\end{tabular}\label{tab:k=3}
\vspace{-10pt}
\end{center}
\end{table}


%
\vspace{-10pt}\begin{figure}
\caption{Discretization errors for DG solution to
equation
(\ref{eqn:testeqn})
with $k = 2$.
\label{fig:DiscErrk2}}
\vspace{-10pt}
\begin{minipage}[t]{0.5\textwidth}
\begin{subfigure}{\textwidth}
\includegraphics[width=\textwidth]{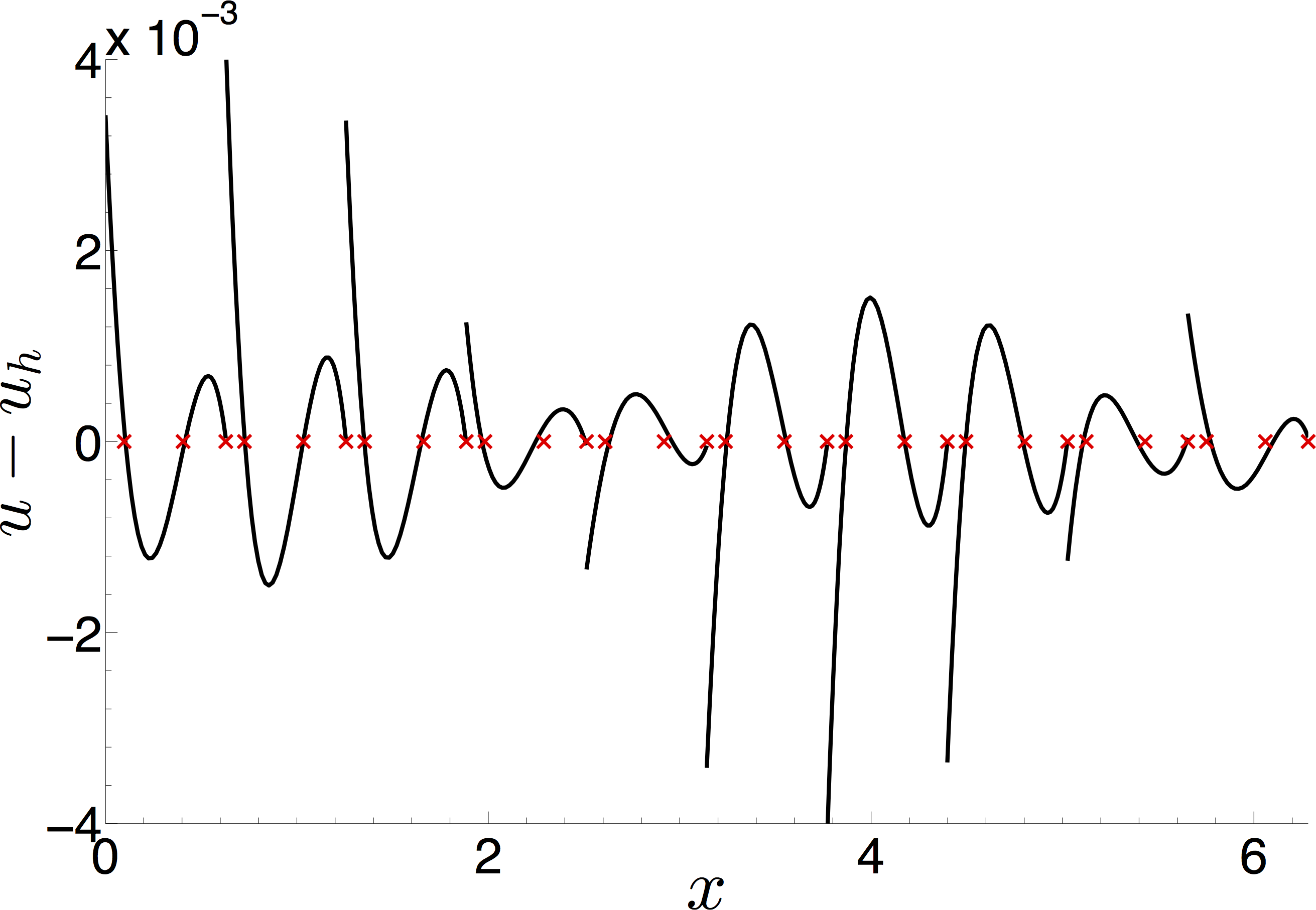}
  \caption{$\theta=1$}
  \end{subfigure}
\end{minipage}
\begin{minipage}[t]{0.5\textwidth}
\begin{subfigure}{\textwidth}
\includegraphics[width=\textwidth]{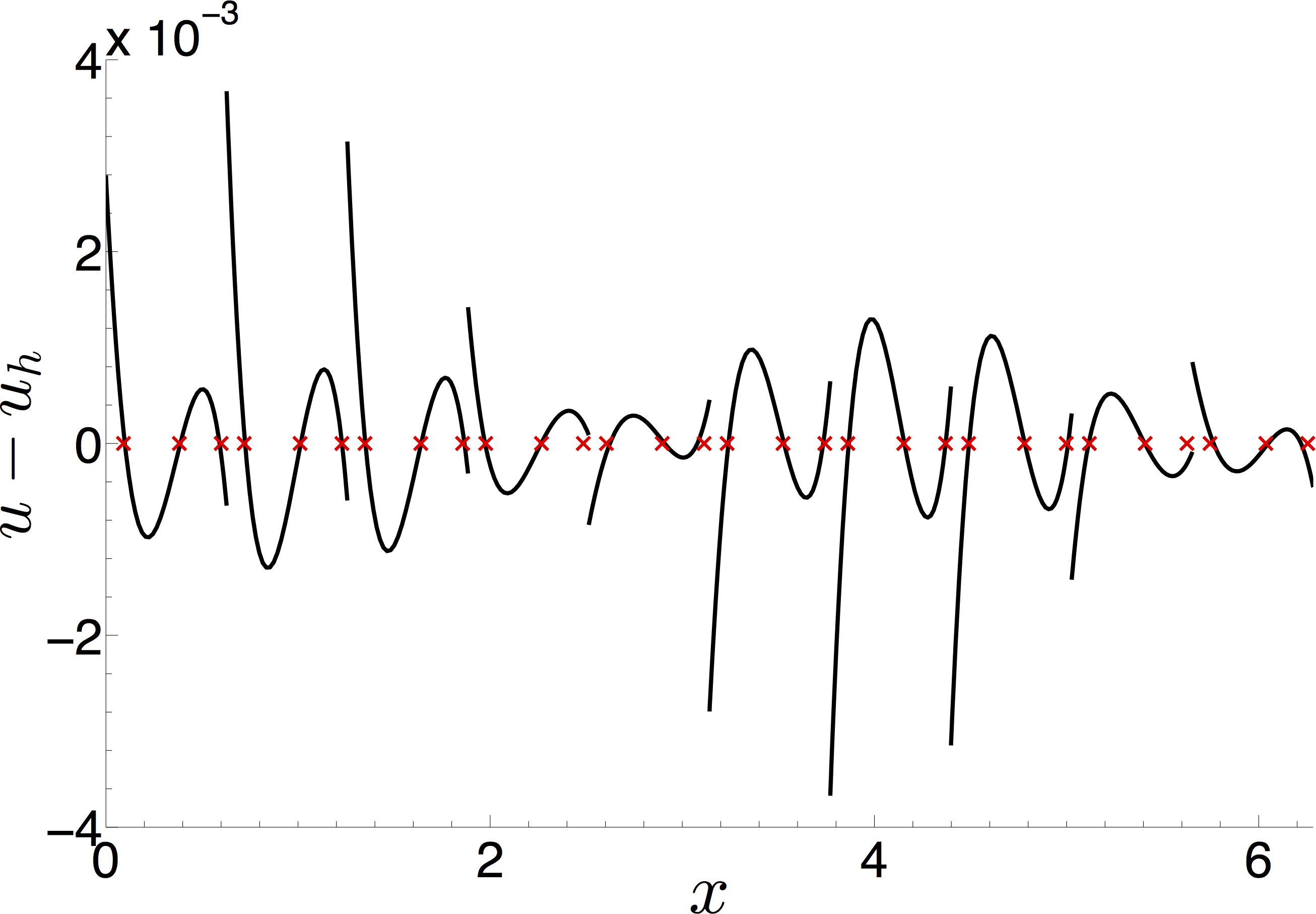}
  \caption{$\theta=0.85$}
    \end{subfigure}
\end{minipage}
\\
\begin{minipage}[t]{0.5\textwidth}
\begin{subfigure}{\textwidth}
\includegraphics[width=\textwidth]{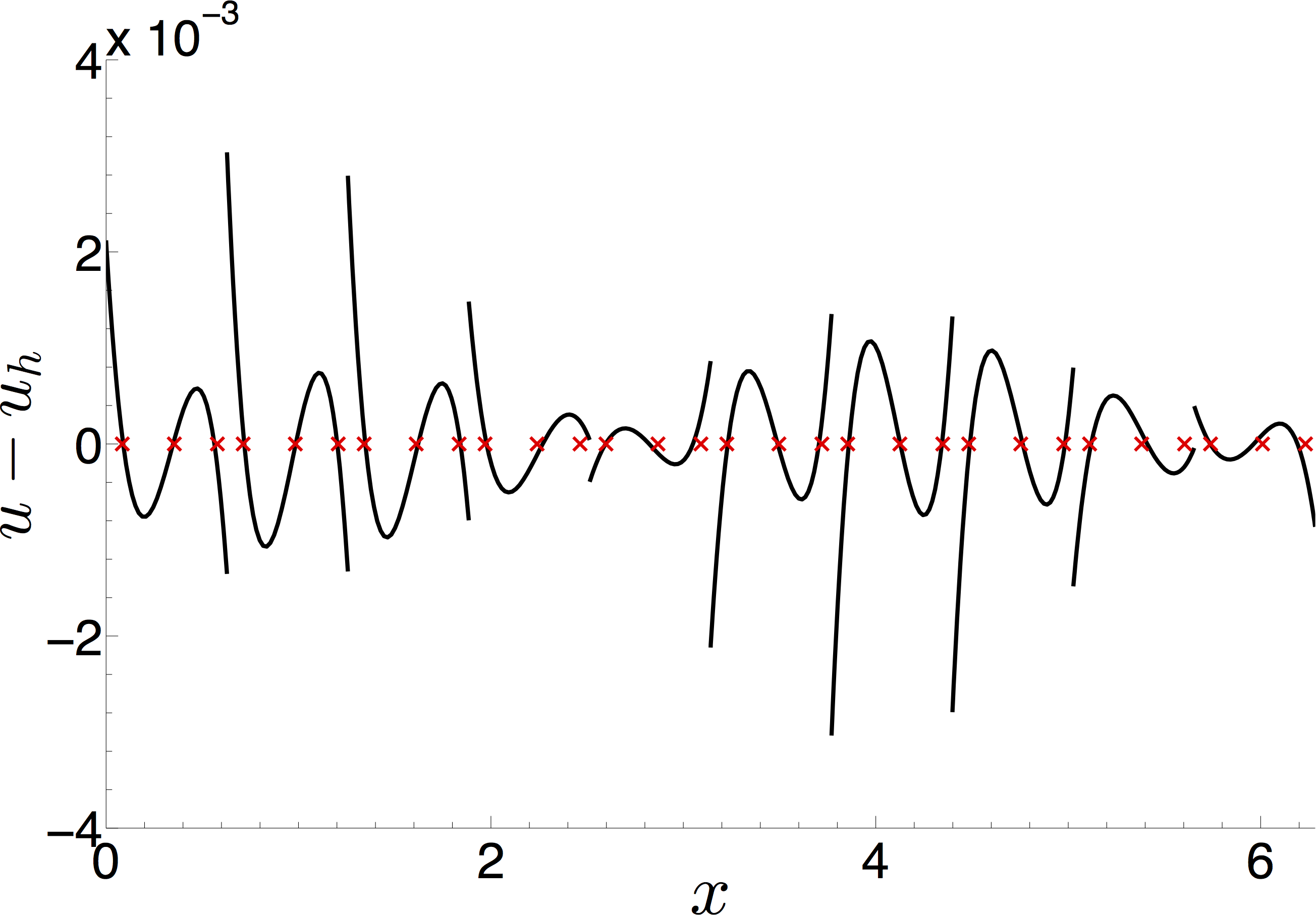}
  \caption{$\theta=0.7$}
    \end{subfigure}
\end{minipage}
\begin{minipage}[t]{0.5\textwidth}
\begin{subfigure}{\textwidth}
\includegraphics[width=\textwidth]{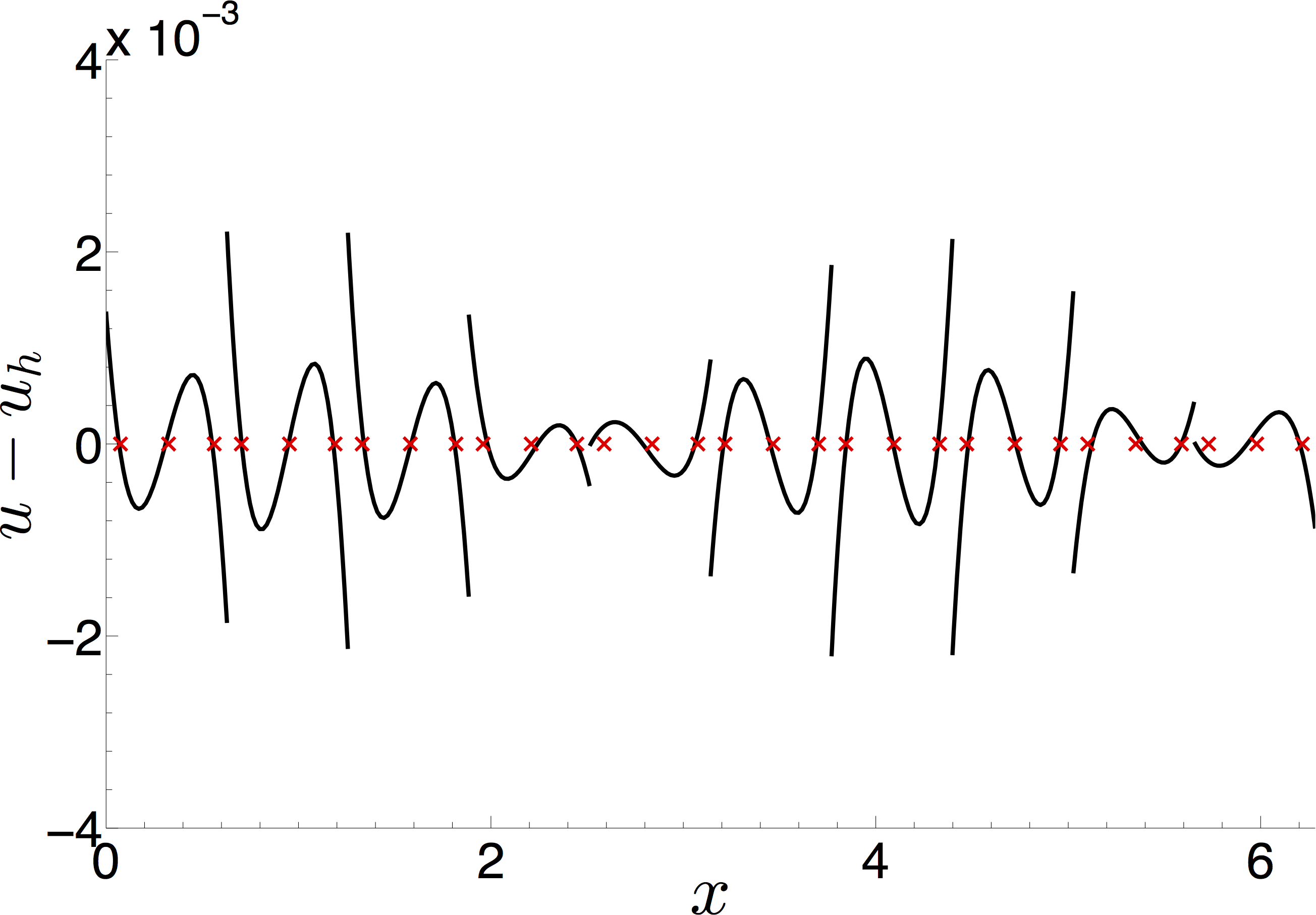}
  \caption{$\theta=0.55$}
    \end{subfigure}
\end{minipage}
\vspace{-10pt}
\end{figure}
\vspace{-10pt}
\vspace{-10pt}
\begin{figure}
\caption{DG and filtered errors for $k = 2$ at time $T = 1$.\label{fig:Errorsk2}}
\vspace{-10pt}
\begin{minipage}[t]{0.44\textwidth}
\begin{subfigure}{\textwidth}
\includegraphics[width=0.95\textwidth]{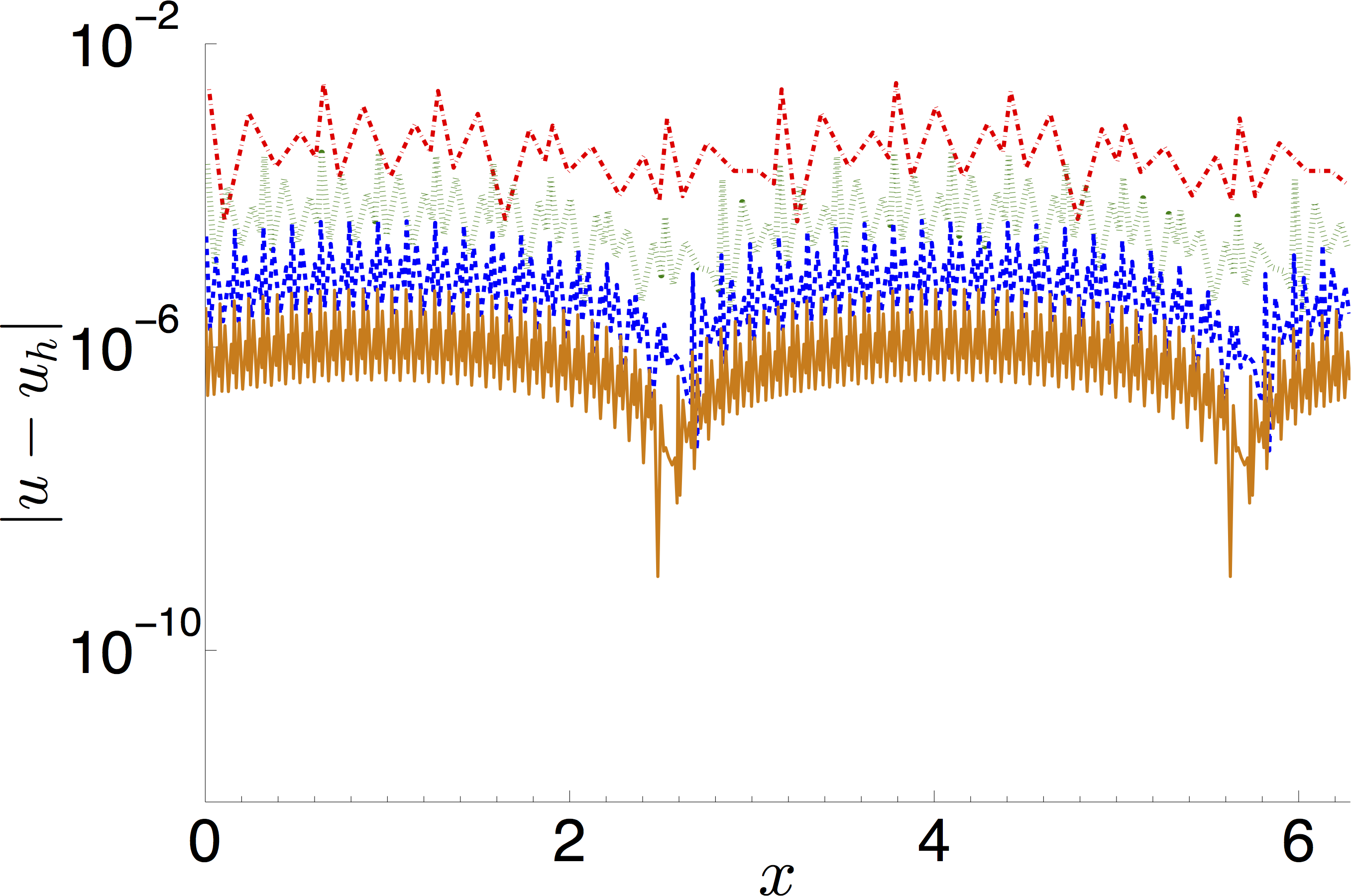}
  \end{subfigure}
\end{minipage}
\begin{minipage}[t]{0.44\textwidth}
\begin{subfigure}{\textwidth}
\includegraphics[width=\textwidth]{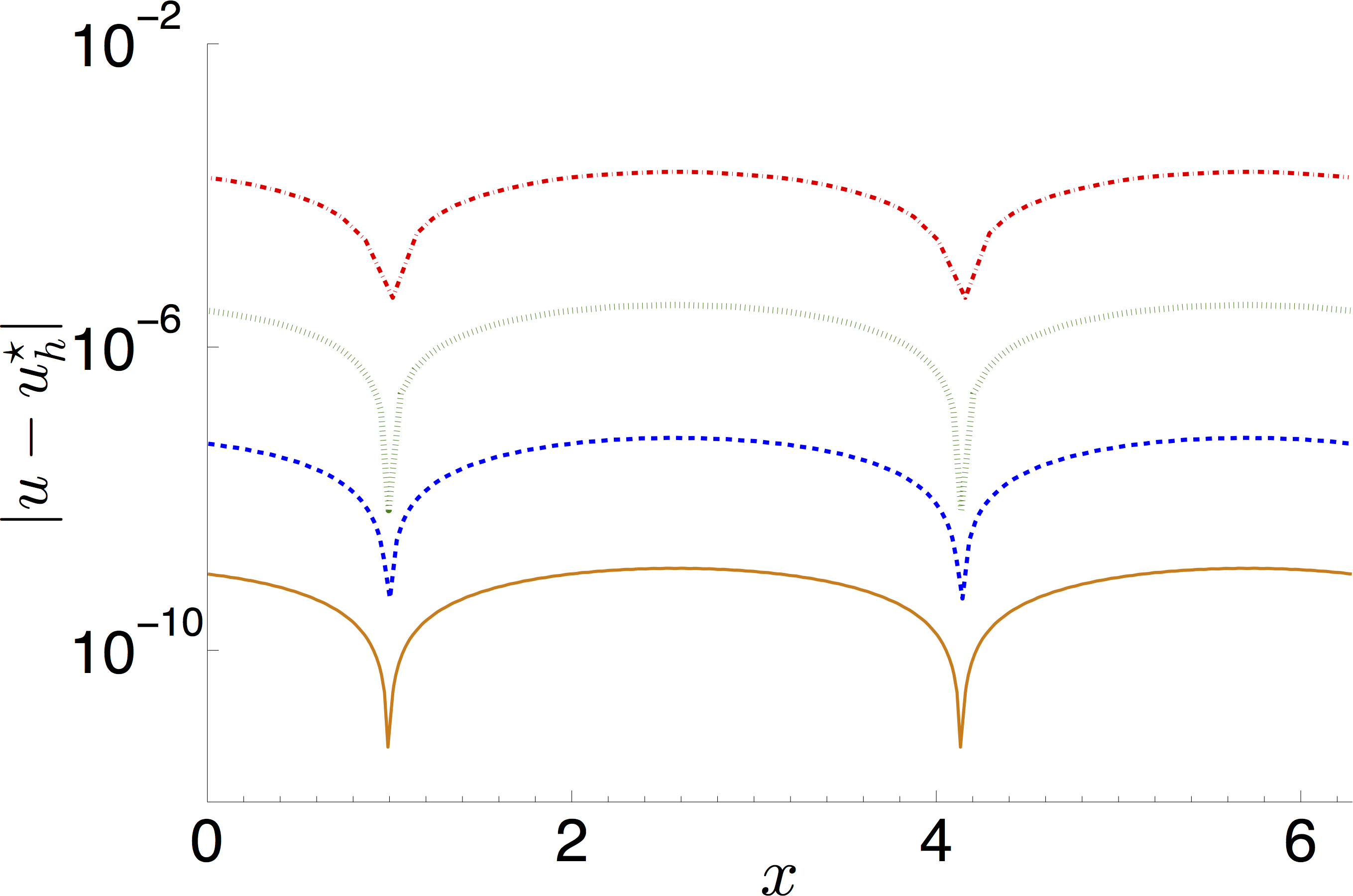}
    \end{subfigure}
\end{minipage}
\begin{minipage}[t]{0.1\textwidth}
\begin{subfigure}{\textwidth}
\vspace*{-0.4cm}
\includegraphics[width=\textwidth]{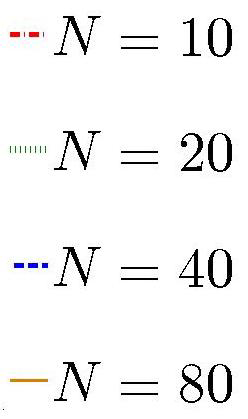}
    \end{subfigure}
\end{minipage}
\begin{center}
(a) Before and after post-processing for $\theta = 1$.
\end{center}
\begin{minipage}[t]{0.44\textwidth}
\begin{subfigure}{\textwidth}
\includegraphics[width=0.95\textwidth]{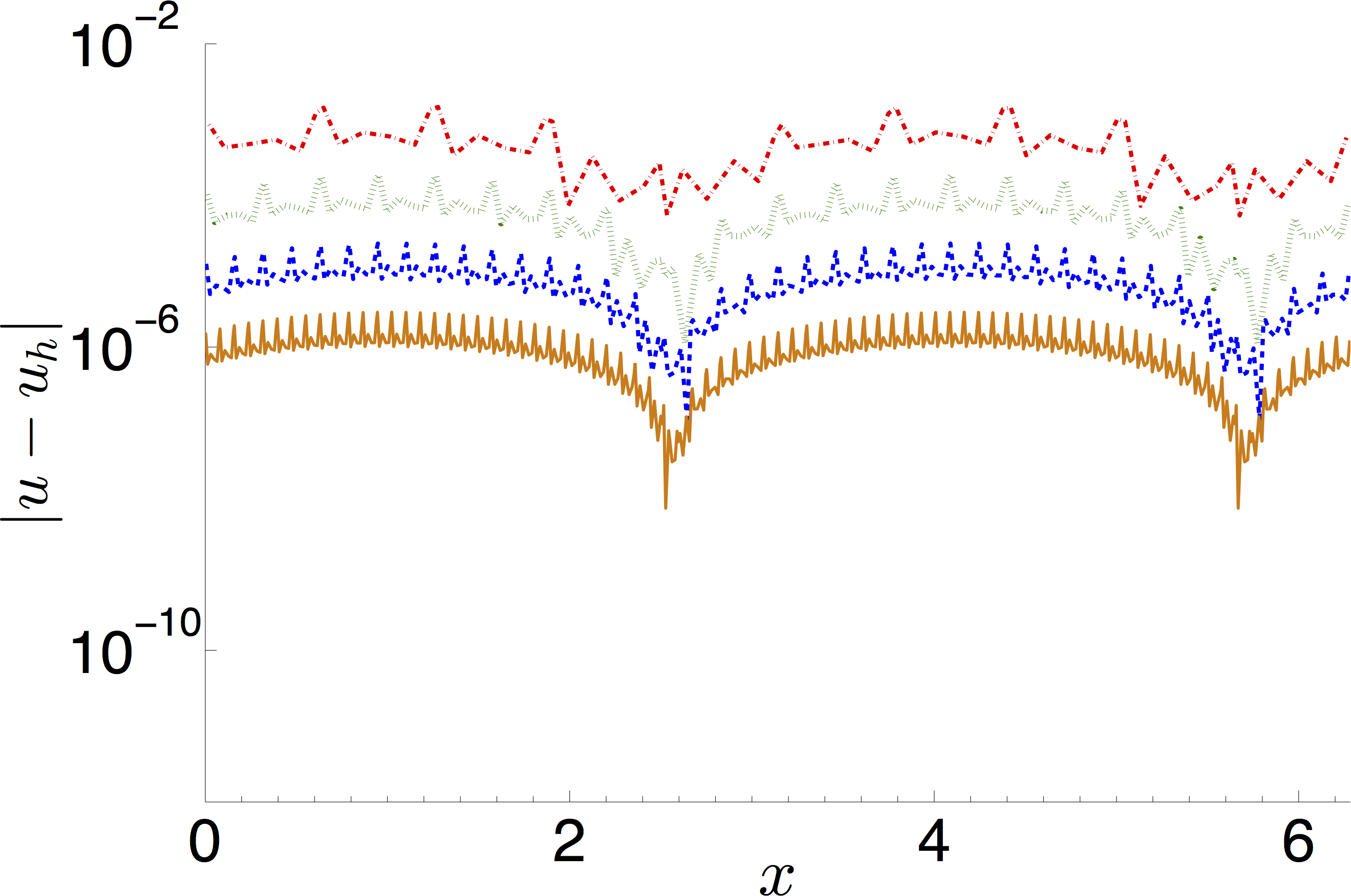}
  \end{subfigure}
\end{minipage}
\begin{minipage}[t]{0.44\textwidth}
\begin{subfigure}{\textwidth}
\includegraphics[width=\textwidth]{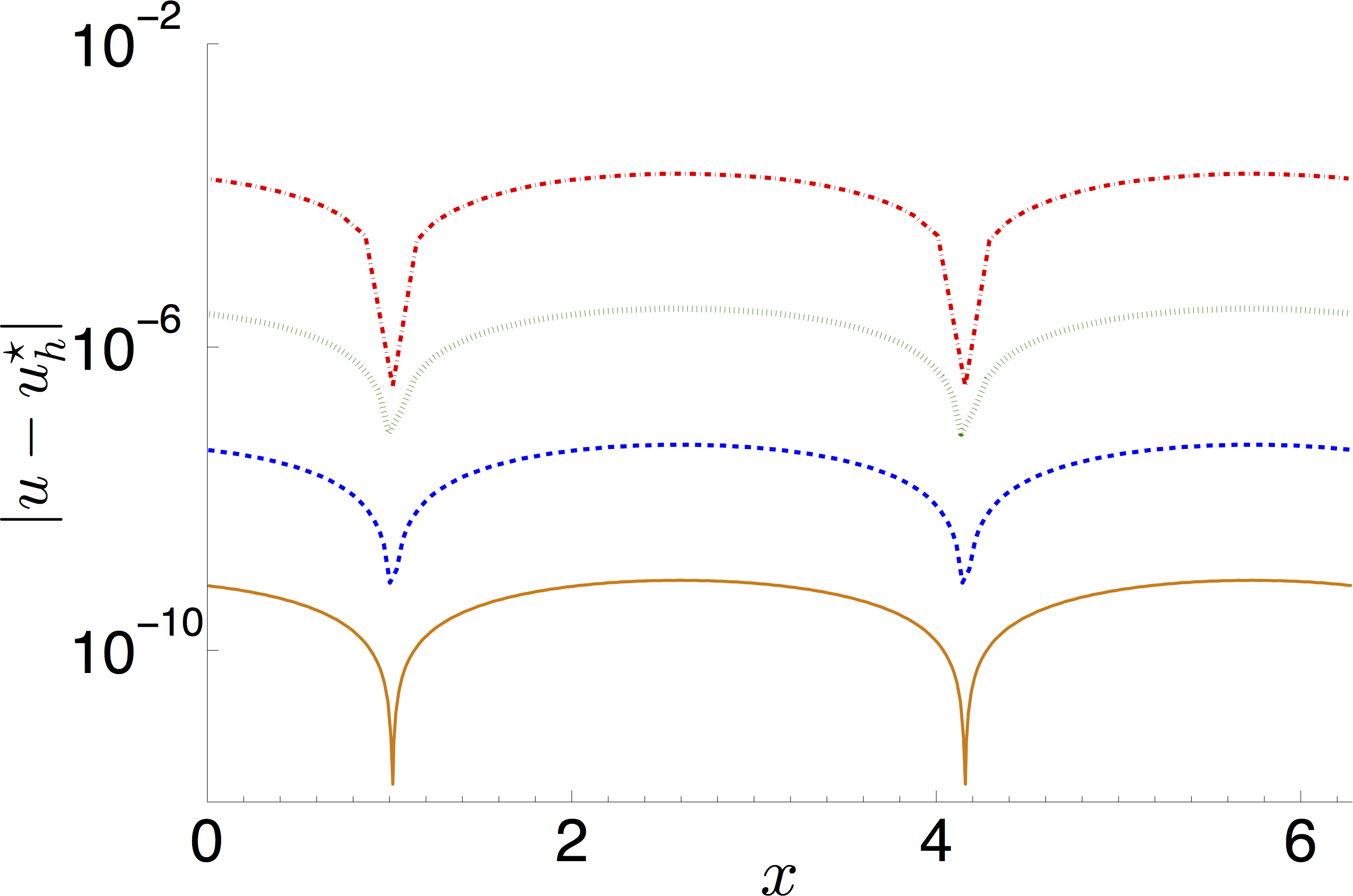}
    \end{subfigure}
\end{minipage}
\begin{minipage}[t]{0.1\textwidth}
\begin{subfigure}{\textwidth}
\vspace*{-0.4cm}
\includegraphics[width=\textwidth]{Legend.png}
    \end{subfigure}
\end{minipage}
\begin{center}
(c) Before and after post-processing for $\theta = 0.55$.
\end{center}
\vspace{-10pt}
\end{figure}

%

\begin{figure}
\caption{Discretization errors for DG solution to
equation
(\ref{eqn:testeqn})
with $k = 3$.
\label{fig:DiscErrk3}}\vspace{-10pt}
\begin{minipage}[t]{0.5\textwidth}
\begin{subfigure}{\textwidth}
\includegraphics[width=\textwidth]{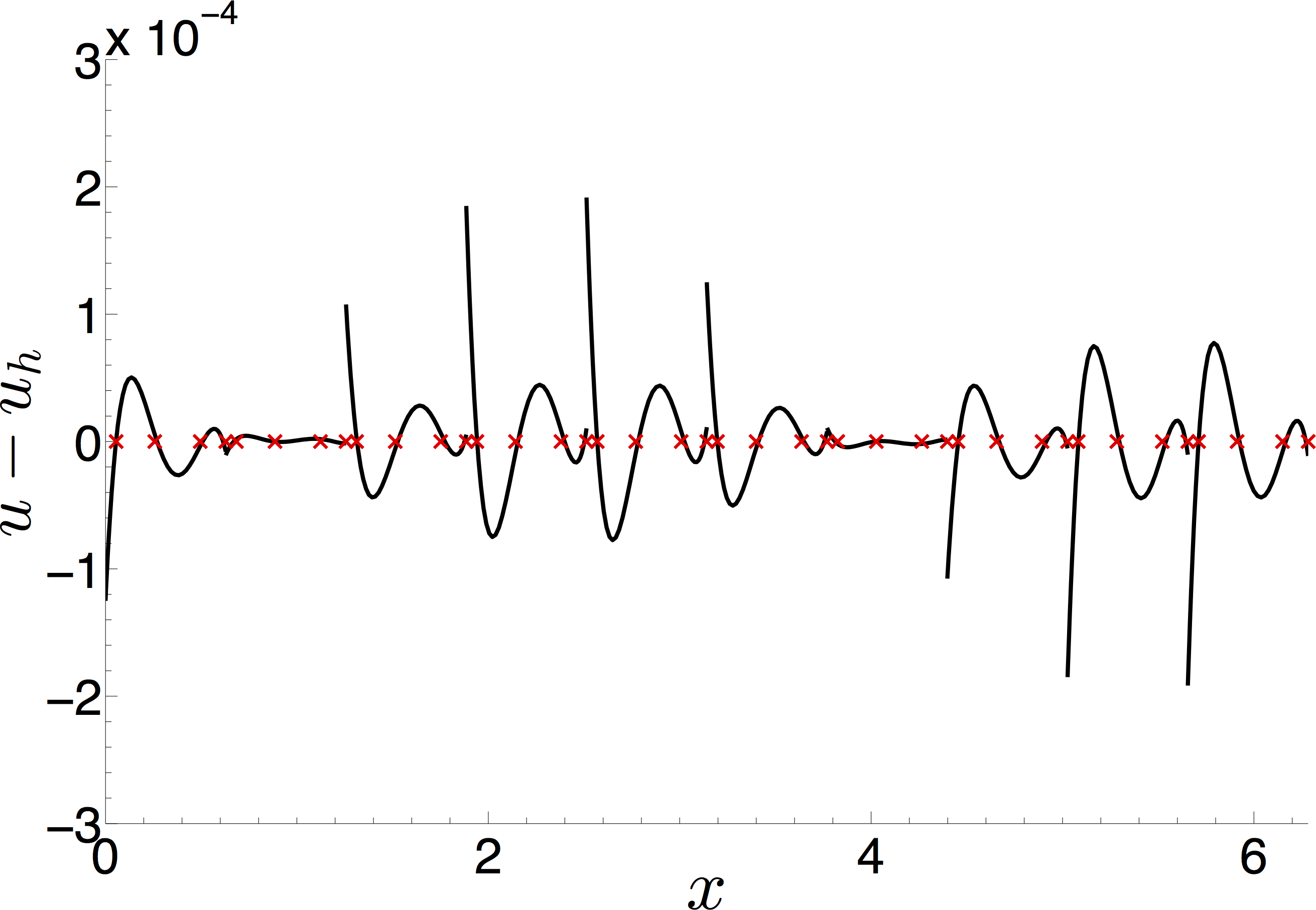}
  \caption{$\theta=1$}
  \end{subfigure}
\end{minipage}
\begin{minipage}[t]{0.5\textwidth}
\begin{subfigure}{\textwidth}
\includegraphics[width=\textwidth]{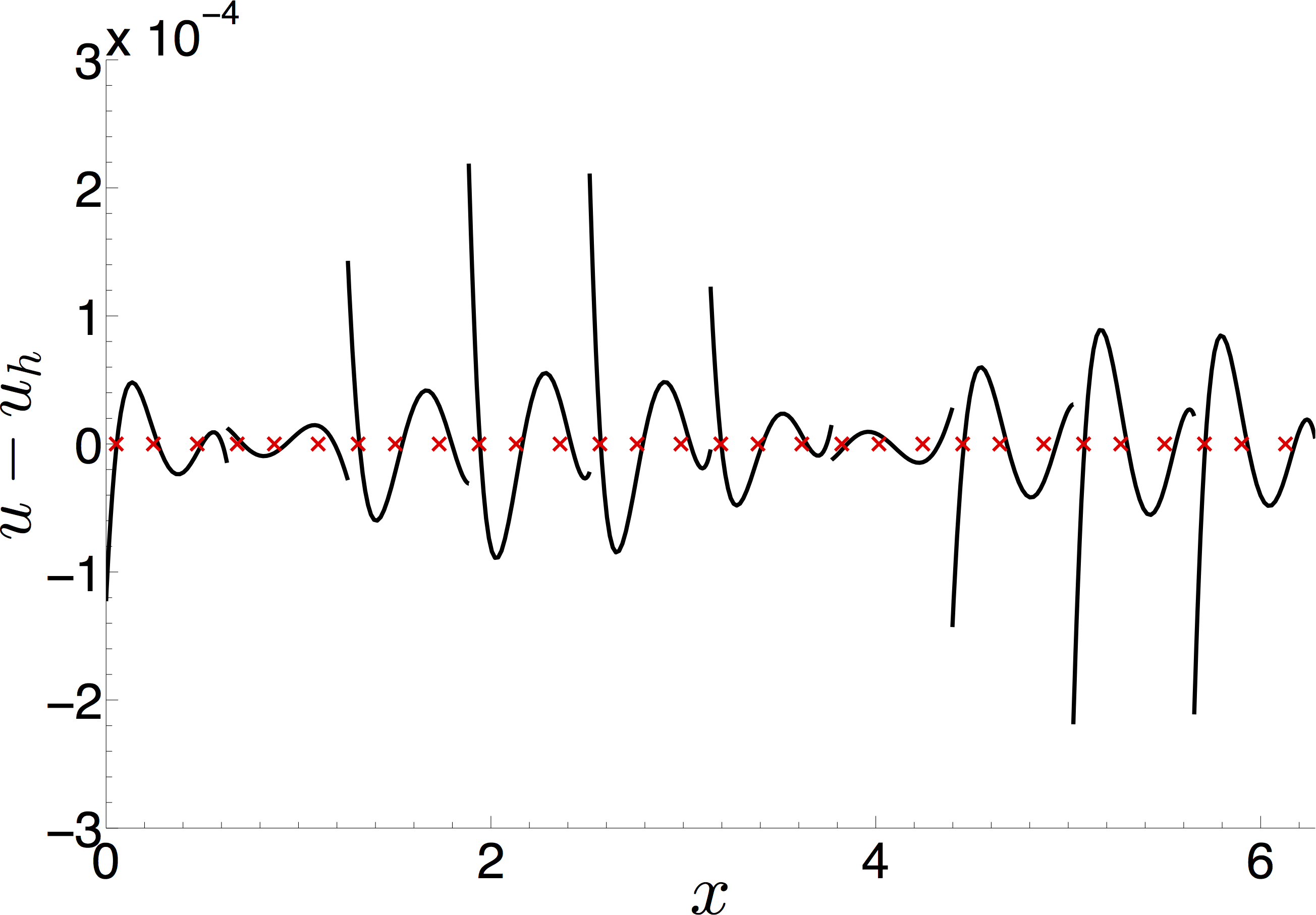}
  \caption{$\theta=0.85$}
    \end{subfigure}
\end{minipage}
\\
\begin{minipage}[t]{0.5\textwidth}
\begin{subfigure}{\textwidth}
\includegraphics[width=\textwidth]{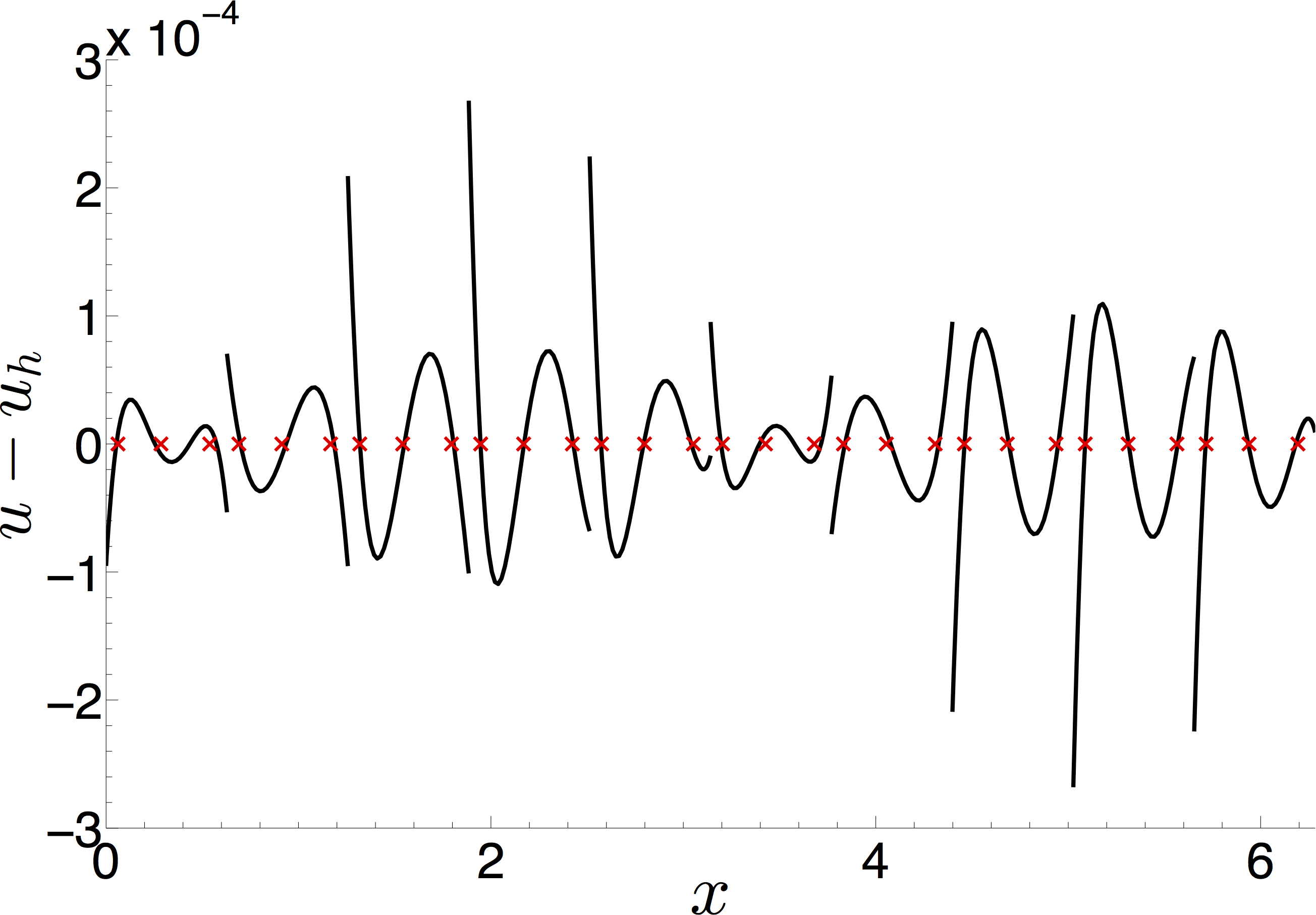}
  \caption{$\theta=0.7$}
    \end{subfigure}
\end{minipage}
\begin{minipage}[t]{0.5\textwidth}
\begin{subfigure}{\textwidth}
\includegraphics[width=\textwidth]{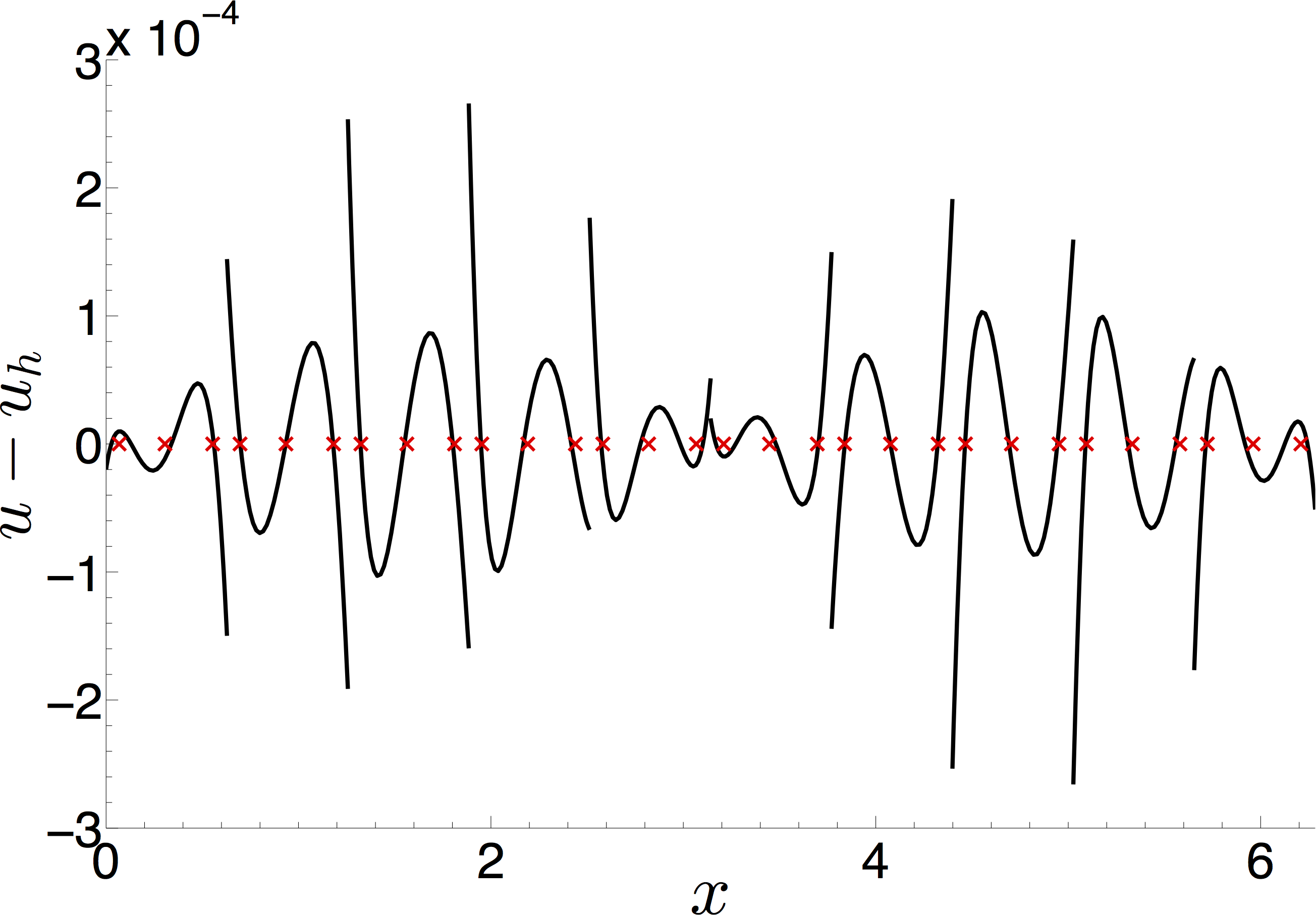}
  \caption{$\theta=0.55$}
    \end{subfigure}
\end{minipage}\vspace{-10pt}
\end{figure}


%

\begin{figure}
\caption{DG and filtered errors for $k = 3$ at time $T = 1$.\label{fig:Errorsk3}}\vspace{-10pt}
\begin{minipage}[t]{0.44\textwidth}
\begin{subfigure}{\textwidth}
\includegraphics[width=0.95\textwidth]{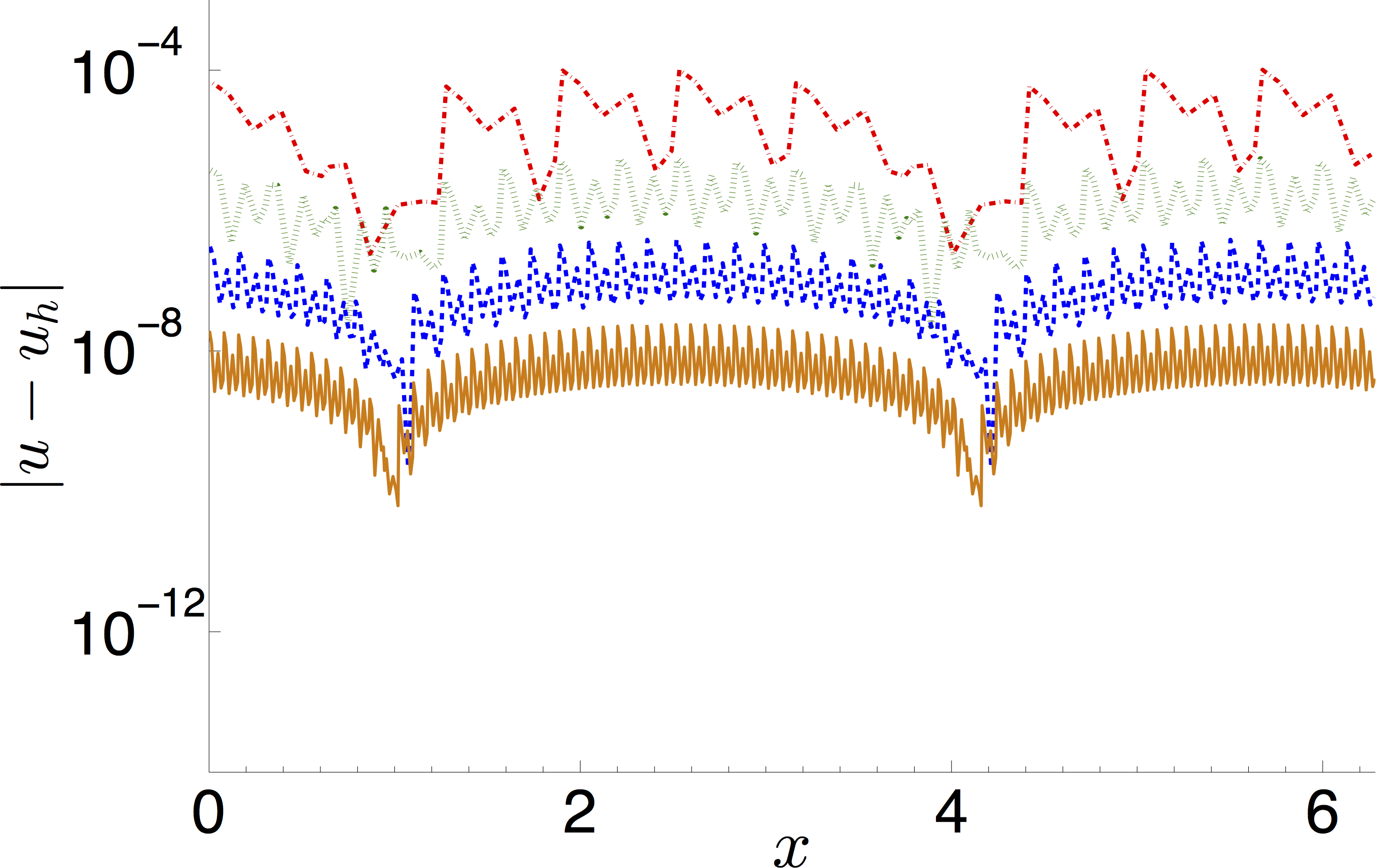}
  \end{subfigure}
\end{minipage}
\begin{minipage}[t]{0.44\textwidth}
\begin{subfigure}{\textwidth}
\includegraphics[width=\textwidth]{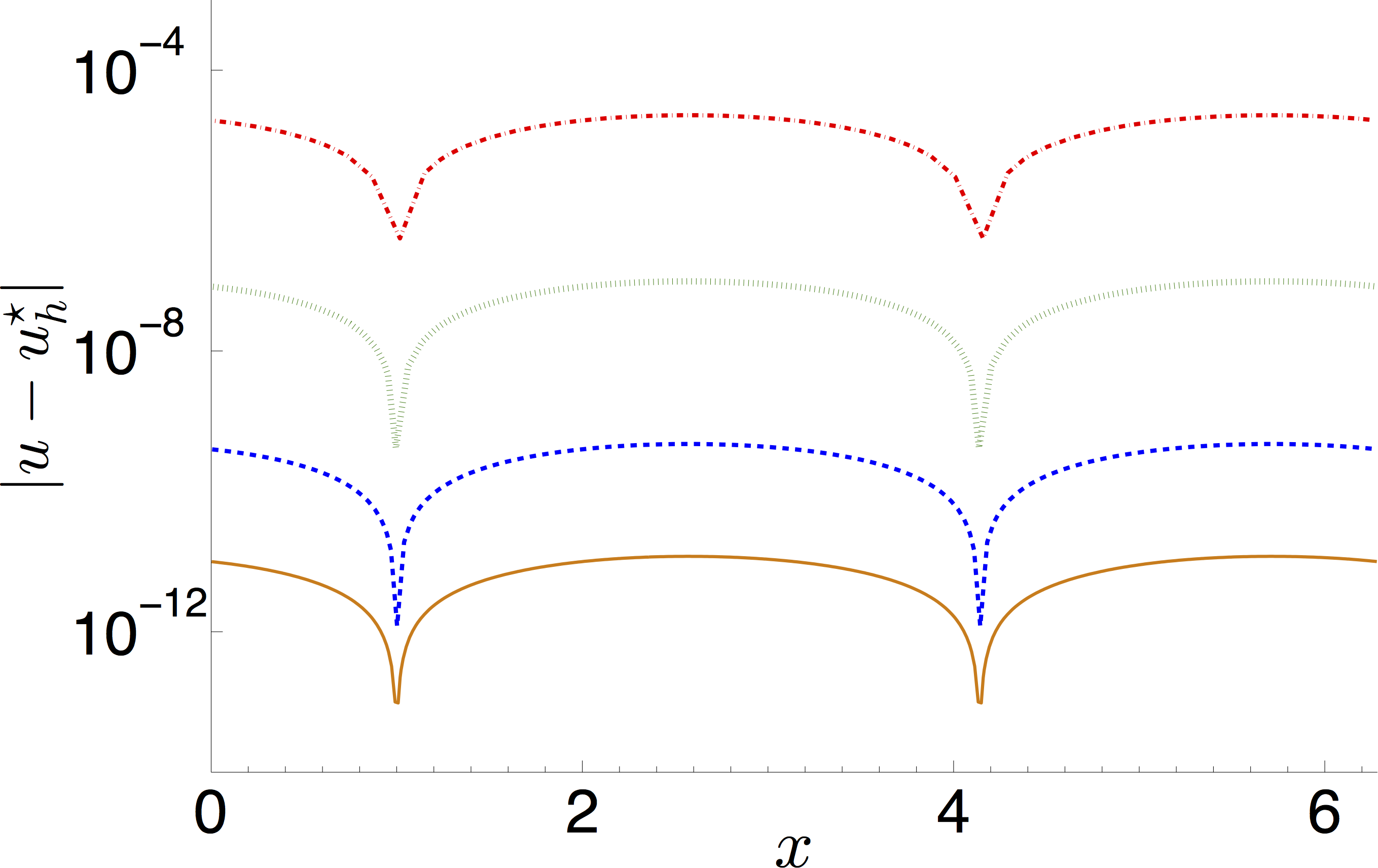}
    \end{subfigure}
\end{minipage}
\begin{minipage}[t]{0.1\textwidth}
\begin{subfigure}{\textwidth}
\vspace*{-0.5cm}
\includegraphics[width=\textwidth]{Legend.png}
    \end{subfigure}
\end{minipage}
\begin{center}
(a) Before and after post-processing for $\theta = 1$.
\end{center}
\begin{minipage}[t]{0.44\textwidth}
\begin{subfigure}{\textwidth}
\includegraphics[width=0.95\textwidth]{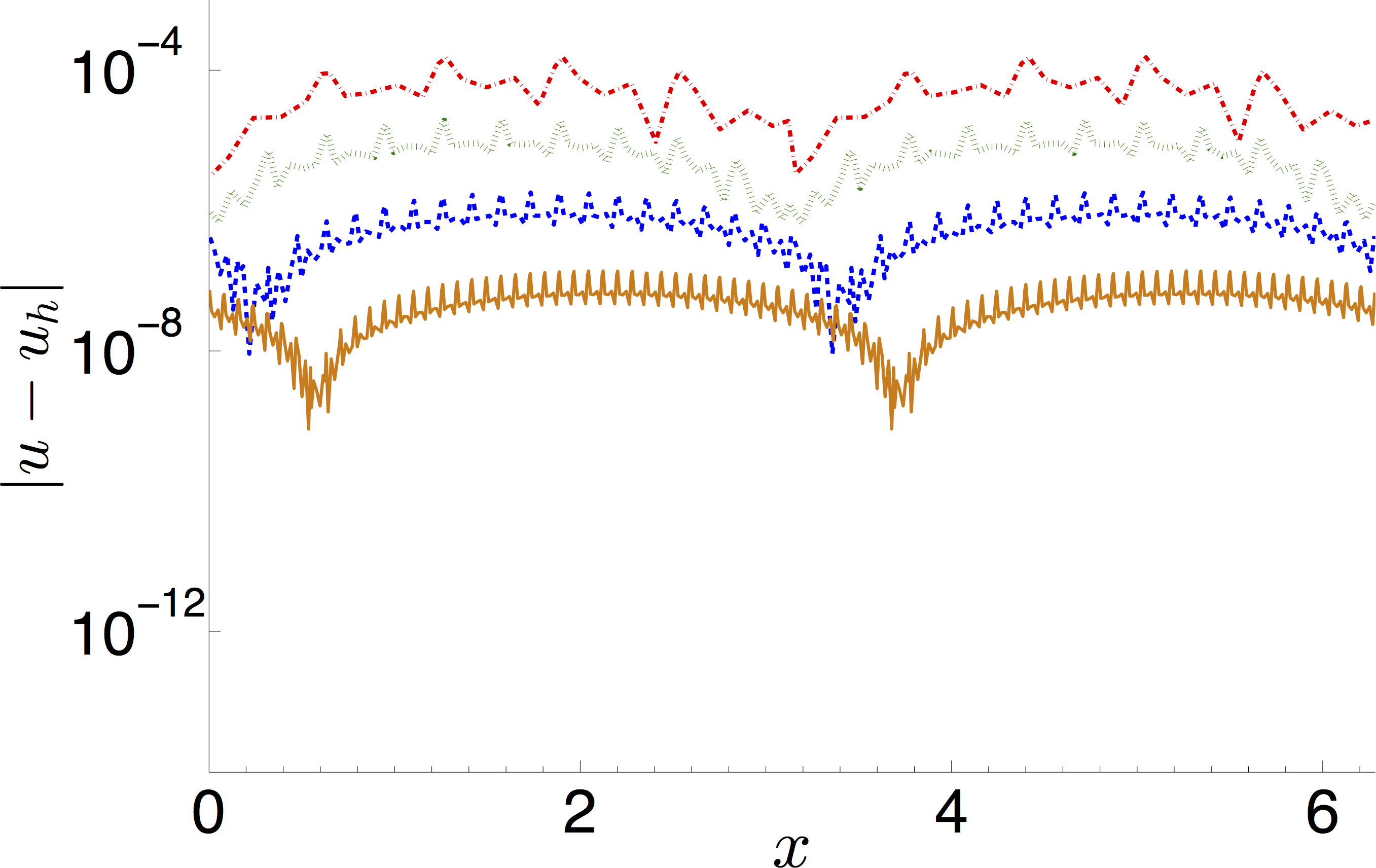}
  \end{subfigure}
\end{minipage}
\begin{minipage}[t]{0.44\textwidth}
\begin{subfigure}{\textwidth}
\includegraphics[width=\textwidth]{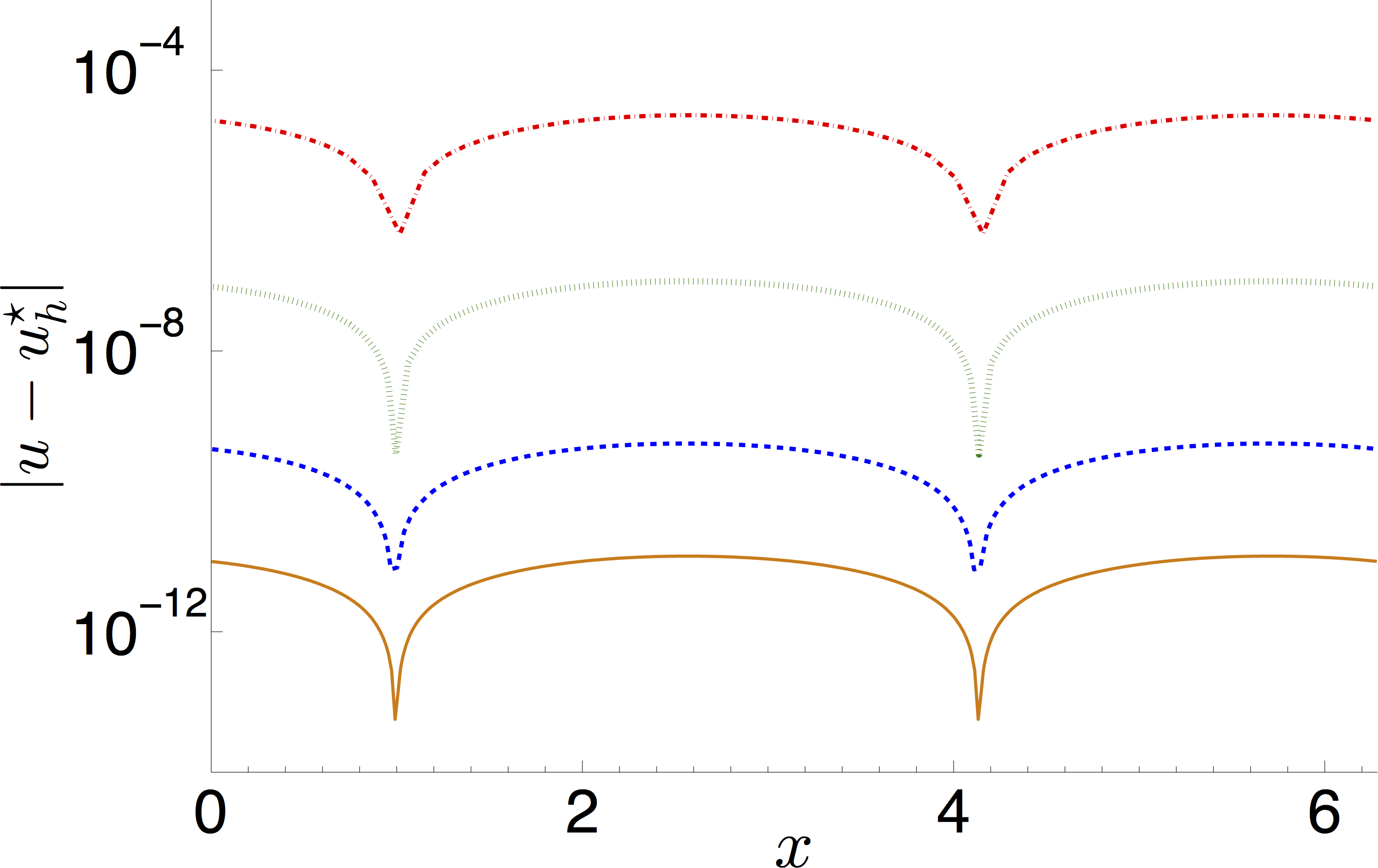}
    \end{subfigure}
\end{minipage}
\begin{minipage}[t]{0.1\textwidth}
\begin{subfigure}{\textwidth}
\vspace*{-0.5cm}
\includegraphics[width=\textwidth]{Legend.png}
    \end{subfigure}
\end{minipage}
\begin{center}
(c) Before and after post-processing for $\theta = 0.55$.
\end{center}\vspace{-10pt}
\end{figure}

%
%

%
%
\section{Conclusions}
\label{sec:conclude}

This paper has presented a study of how the superconvergent pointwise error estimates and negative-order norm analysis changes with a change in flux.  We have proven that for even-degree polynomials, the method is locally superconvergent of order $k+2$ at a linear combination of the right- and left-Radau polynomials.  This also holds true for odd degree polynomials provided a suitable global initial projection is defined.  Further, we have demonstrated that only the constant in the negative-order norm analysis changes for the upwind-biased flux and hence we are able to extract a superconvergent solution of $\mathcal{O}(h^{2k+1}).$    We believe that these estimate will be useful for designing optimal fully discrete schemes that take advantage of this superconvergence information.  We leave this as well as the analysis of non-linear equations for future work.
\\
\\\footnotesize
\textbf{Acknowledgements}\quad
The authors' research is supported by the Air Force Office of Scientific Research (AFOSR), Air Force Material Command, USAF, under grant number FA8655-13-1-3017. We would also like to thank Xinghui Zhong and Jingmei Qiu for guidance in the dispersion analysis and Xiong Meng for many useful discussions. 
%
%
\nocite{*}
\FloatBarrier
\bibliographystyle{plain}      

\end{document}